\newcommand{\R}{\mathbb{R}}
\newcommand{\T}{\mathbb{T}}
\newcommand{\TP}{\mathbb{TP}}
\newcommand{\C}{\mathcal{C}}
\newcommand{\B}{\mathcal{B}}
\newcommand{\D}{\mathcal{D}}
\newcommand{\puiseux}{\mathbb{C}\{\!\{t\}\!\}}
\newcommand{\sn}{2^{[n]}}
\DeclareMathOperator{\supp}{supp}
\DeclareMathOperator{\convex}{convex}
\DeclareMathOperator{\tdet}{tdet}
\DeclareMathOperator{\TGr}{TGr}
\DeclareMathOperator{\Dr}{Dr}
\DeclareMathOperator{\Cayley}{Cayley}
\newtheorem{theorem}{Theorem}[section]
\newtheorem{lemma}[theorem]{Lemma}
\newtheorem{proposition}[theorem]{Proposition}
\newtheorem{corollary}[theorem]{Corollary}
\newtheorem{conjecture}[theorem]{Conjecture}
\theoremstyle{definition}
\newtheorem{definition}[theorem]{Definition}
\newtheorem{example*}[theorem]{Example}
\theoremstyle{remark}
\DeclareRobustCommand{\qedify}[1]{%
  \ifmmode \quad\hbox{#1}
  \else
    \leavevmode\unskip\penalty9999 \hbox{}\nobreak\hfill
    \quad\hbox{#1}%
  \fi
}
\newenvironment{example}{\begin{example*}\pushQED{\qedify{$\diamondsuit$}}}{\popQED\end{example*}}
\begin{document}

\begin{abstract}
 In this paper we study general tropical linear spaces locally: For any basis $B$ of the matroid underlying a tropical linear space $L$, we define the local tropical linear space $L_B$ to be the subcomplex of $L$ consisting of all vectors $v$ that make $B$ a basis of maximal $v$-weight. The tropical linear space $L$ can then be expressed as the union of all its local tropical linear spaces, which we prove are homeomorphic to Euclidean space. Local tropical linear spaces have a simple description in terms of polyhedral matroid subdivisions, and we prove that they are dual to mixed subdivisions of Minkowski sums of simplices. Using this duality we produce tight upper bounds for their $f$-vectors. We also study a certain class of tropical linear spaces that we call conical tropical linear spaces, and we give a simple proof that they satisfy Speyer's $f$-vector conjecture.
\end{abstract}

\title{\textsf{Local tropical linear spaces}}
\author{\textsf{Felipe Rinc\'on}}
\address{Mathematics Institute, University of Warwick, Coventry, CV4 7AL, United Kingdom.}
\email{e.f.rincon@warwick.ac.uk}
\thanks{\textsf{University of Warwick, Coventry, UK.}\quad{\tt e.f.rincon@warwick.ac.uk}}
\maketitle

\section{Introduction} 

Tropical linear spaces are one of the most basic objects in tropical geometry. They are polyhedral complexes associated to tropical Pl\"ucker vectors (or valuated matroids), and they are obtained as tropicalizations of classical linear subspaces. Tropical linear spaces play a prominent role in several contexts like the study of tropicalizations of varieties obtained as the image of a linear subspace under a monomial map \cite{tropdisc}, or the study of realizability questions and intersection theory in tropical geometry (see \cite{kristin}, \cite{diagonal}, \cite{realizationspaces}). 

In \cite{speyer}, Speyer studied the combinatorial structure of tropical linear spaces, and in particular, he showed that tropical linear spaces can be described in terms of subdivisions of matroid polytopes (as we describe in Section \ref{seclocal}). He formulated a conjecture on the upper bound for the $f$-vector of a tropical linear space:
\begin{conjecture}[The $f$-vector conjecture \cite{speyer}]\label{fvector}
 If $L$ is an $m$-dimensional tropical linear space in $\R^n$ then $L$ has at most $\binom{n-i-1}{i-1} \binom{n-2i}{m-i}$ faces of dimension $i$ that become bounded after modding out by the lineality space generated by the vector $(1,\dotsc,1) \in \R^n$.
\end{conjecture}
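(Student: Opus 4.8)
The plan is to reduce the global count to the tight local $f$-vector bounds established for the pieces $L_B$, using the decomposition $L = \bigcup_B L_B$ together with a degeneration to the generic uniform case as the extremal certificate. First I would recall Speyer's dictionary between a tropical linear space $L$ and the matroid subdivision $\D$ of the matroid polytope $P_M$ of its underlying rank-$(m+1)$ matroid $M$ on $[n]$: the faces of $L$ of dimension $i$ that stay bounded modulo the lineality direction $(1,\dotsc,1)$ correspond precisely to the interior faces of $\D$ of the complementary dimension, that is, those not lying on $\bd P_M$. Since every matroid polytope satisfies $P_M \subseteq \Delta(m+1,n)$, the full hypersimplex, the conjectured bound is exactly the number of such interior faces for a generic (finest) matroid subdivision of the whole hypersimplex, i.e.\ the $f$-vector of the generic tropical linear space of the uniform matroid $U_{m+1,n}$, which Speyer computed to equal $\binom{n-i-1}{i-1}\binom{n-2i}{m-i}$. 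So the conjecture becomes the statement that no subdivision of any $P_M$ produces more interior $i$-faces than this.

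Second, I would localize. Each bounded face $F$ of $L$ is a face of some local piece $L_B$, and the basis realizing maximal $v$-weight is constant on the relative interior of $F$; choosing, say, the lexicographically least such basis yields a canonical assignment $F \mapsto B_F$, hence an injection from bounded $i$-faces of $L$ into pairs $(B,\ \text{bounded } i\text{-face of } L_B)$. For each fixed $B$ the number of bounded $i$-faces of $L_B$ is controlled by the tight upper bound proved earlier through the duality between $L_B$ and mixed subdivisions of a Minkowski sum of simplices, equivalently, via the Cayley trick, through polyhedral subdivisions of the associated Cayley polytope. The advantage of working locally is that near $B$ the relevant polytope genuinely becomes a Minkowski sum of simplices, so the clean mixed-subdivision count applies, whereas the hypersimplex itself is not of this form and admits no such direct count.

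The crux, and the step I expect to be the main obstacle, is the global assembly: summing the per-piece bounds along the canonical assignment and showing the total telescopes to exactly $\binom{n-i-1}{i-1}\binom{n-2i}{m-i}$, with equality forced only by the generic uniform configuration. The difficulty is twofold. First, the notion of ``interior'' is relative to the ambient polytope, so a face interior to $\D$ when restricted to $L_B$ may sit on $\bd P_M$ globally, and one must reconcile the two boundary structures. Second, and more seriously, one needs a monotonicity principle: coarsening a matroid subdivision, or replacing $P_M$ by a smaller matroid polytope, can only decrease the number of interior $i$-faces. I would attempt this by a semicontinuity argument on tropical Pl\"ucker vectors, deforming $p$ generically inside the Dressian $\Dr$ so that $\D$ refines and its support grows toward $\Delta(m+1,n)$, while tracking how interior faces appear or disappear, thereby reducing the general case to the saturated generic one where every local bound is attained simultaneously. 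Establishing this monotonicity in full, rather than only for the conical pieces where the entire bounded complex already lies in a single $L_B$, is the essential gap that the local machinery is designed to close but whose global coherence must still be argued.
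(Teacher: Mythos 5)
You are attempting to prove a statement that the paper itself presents as an \emph{open conjecture}: Conjecture~\ref{fvector} is stated without proof, and the introduction explicitly records that only special cases are known ($i=1$, the realizable case in characteristic zero via \cite{speyerinvariant}, and, in this paper, the conical case). What the paper actually proves is Theorem~\ref{conicalfvector}: a \emph{conical} tropical linear space satisfies the conjecture, and the proof is immediate precisely because, by definition, all bounded faces of such an $L$ lie in a \emph{single} local piece $L_B$, so the tight local bound of Proposition~\ref{localfvector} applies verbatim. Your proposal correctly reconstructs the local machinery (the duality of $L_B$ with mixed subdivisions via the Cayley trick, and the count $\binom{n-i-1}{i-1}\binom{n-2i}{m-i}$ for a fine mixed subdivision), but the ``global assembly'' step that you yourself flag as the crux is not a technical loose end --- it is the entire open problem, and your sketch of it does not work.

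Concretely, two things fail. First, your canonical assignment $F \mapsto (B_F, F)$ only yields the bound $(\text{number of bases}) \cdot \binom{n-i-1}{i-1}\binom{n-2i}{m-i}$, which vastly overshoots: the per-piece bound already \emph{equals} the conjectured global bound, so no summation over several bases can ``telescope'' down to it; the argument closes only when every bounded face lands in one $L_B$, i.e.\ exactly in the conical case the paper handles. Second, the monotonicity/semicontinuity principle you hope to establish is both unproven and structurally problematic: a generic perturbation of $p$ does not stay inside the Dressian $\Dr_{m,n}$ (which has high codimension inside the secondary fan of $\Delta_{m,n}$); ``growing the support'' of $p$ changes the underlying matroid and is not a deformation within a connected parameter family; and interior faces of a subdivision $\D$ of a proper matroid polytope $\Gamma(M) \subsetneq \Delta_{m,n}$ need not be interior relative to the hypersimplex, so refinement and inclusion comparisons do not transfer. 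Even granting a refinement to a finest matroid subdivision, the claim that every finest matroid subdivision of $\Delta_{m,n}$ has at most $\binom{n-i-1}{i-1}\binom{n-2i}{m-i}$ interior faces of the relevant dimension is essentially Conjecture~\ref{fvector} itself (Speyer's \cite{speyer} proves this only for facets, which is what the paper's proof of Proposition~\ref{joswigconjecture} exploits). A minor convention slip as well: in this paper an $m$-dimensional tropical linear space corresponds to a rank-$m$ tropical Pl\"ucker vector, not rank $m+1$. In short: your plan, if repaired, would at best reprove Theorem~\ref{conicalfvector}; it does not prove, and cannot as written prove, the conjecture.
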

A few special cases of this conjecture were proved in \cite{speyer}, like the case when $i=1$. In \cite{speyerinvariant} Speyer proved it for tropical linear spaces which are realizable over a field of characteristic zero, by studying an interesting matroid invariant arising from the $K$-theory of the Grassmannian. The conjecture is still open in the general case.

Tropical linear spaces have also been studied in connection to Dressians and tropical Grassmannians. In \cite{drawtropicalplanes}, several combinatorial results on tropical planes were developed to study the Dressians $\Dr(3,n)$ and the tropical Grassmannians $\TGr(3,n)$, with an emphasis on the case $n = 7$. In \cite{joswig} some of these results were extended and applied to study the case $n=8$, achieving a combinatorial characterization of all rays in the Dressian $\Dr(3,8)$.

In this paper we study tropical linear spaces locally: For any basis $B$ of the matroid underlying a tropical linear space $L$, we define the local tropical linear space $L_B$ to be the subcomplex of $L$ consisting of all vectors $v \in L$ that make $B$ a basis of maximal $v$-weight. A more precise definition is given in Section \ref{seclocal}. The space $L_B$ consists then of all cells of $L$ that are ``near'' the vertex $e_B$ of the underlying matroid polytope $\Gamma(M)$. Our definition generalizes the notion of local Bergman fan given by Feichtner and Sturmfels in \cite{feichtnersturmfels} to the non-constant coefficient case.

The tropical linear space $L$ is the union of all its local tropical linear spaces. In Section \ref{seclocal} we prove that they are all homeomorphic to Euclidean space by giving an explicit piecewise linear homeomorphism. This leads to an effective algorithm for projecting points onto general tropical linear spaces, also described by Corel in \cite{corel}. Our explicit description could also be used for efficiently computing local tropical linear spaces and thus general tropical linear spaces, by generalizing the approach given in \cite{computing} to the non-constant coefficient case.

In Section \ref{secmixed} we study the combinatorics of local tropical linear spaces, and we use the Cayley trick to prove that they are combinatorially dual to mixed subdivisions of Minkowski sums of simplices. This allows us to produce tight upper bounds on their $f$-vectors. 

In Section \ref{secconical} we study a certain class of tropical linear spaces that we call conical tropical linear spaces, which correspond to polyhedral matroid subdivisions in which all maximal faces share a common vertex. In the one dimensional case, these tropical linear spaces correspond exactly to caterpillar trees. Conical tropical linear spaces turn out to be realizable and thus they satisfy Speyer's $f$-vector conjecture, but we give a purely combinatorial proof of that fact. As an application, we give an independent proof of a result of Herrmann, Joswig, and Speyer, which appeared as a conjecture in a first version of \cite{joswig}.

\section{Local tropical linear spaces}\label{seclocal}

In this section we define local tropical linear spaces and prove that they are homeomorphic to Euclidean space. We start with some general background on tropical Pl\"ucker vectors and tropical linear spaces.

Let $m \leq n$ be nonnegative integers. We will use the notation $[n]:=\{1,\dotsc,n\}$, $2^{[n]}:=$ power set of $[n]$, and $\binom{[n]}{m}:=\{S \in 2^{[n]} \mid \left|S\right| = m\}$. Let $\T := (\R \cup \{\infty\}, \min, +)$ be the \textbf{tropical semiring} of real numbers \emph{including infinity}. A vector $p \in \T^{\binom{[n]}{m}}$ is called a \textbf{tropical Pl\"ucker vector} of rank $m$ if it satisfies the tropical Pl\"ucker relations, that is, for any $S, T \in \sn$ satisfying $|S| = m-1$ and $|T|= m+1$, the minimum
\begin{equation}\label{eqplucker}
 \min_{i \in T \setminus S} (p_{S \cup i} + p_{T-i})
\end{equation}
is achieved at least twice (i.e., for at least two different values of $i$) or it is equal to $\infty$. It follows that the \textbf{support} $\supp(p) := \{ B \in \binom{[n]}{m} \mid p_B \neq \infty \}$ of $p$ is the collection of bases of matroid over $[n]$, called the \textbf{underlying matroid} of $p$. In the literature, tropical Pl\"ucker vectors have also been studied under the name of \textbf{valuated matroids}, but using the opposite sign convention to ours \cite{valuated}. The space of all tropical Pl\"ucker vectors is called the \textbf{Dressian}, and it is denoted as $\Dr_{m,n} \subseteq \T^{\binom{[n]}{m}}$.

Tropical Pl\"ucker vectors can be described in term of polyhedral subdivisions in the following way. To any collection $\mathcal{S}$ of subsets of $[n]$ we can associate a $0/1$ polytope $\Gamma(\mathcal{S}):= \convex \{e_S \mid S \in \mathcal{S} \} \subseteq \R^n$, where $e_S:= \sum_{i \in S} e_i$. Matroids can be easily characterized from this point of view (see \cite{ggms}): A collection $\mathcal{S} \subseteq 2^{[n]}$ is the collection of bases $\B(M)$ of a matroid $M$ over the ground set $[n]$ if and only if its associated polytope $\Gamma(\mathcal{S})$ has only edges of the form $e_i - e_j$ for $i,j \in [n]$ distinct. In this case, the polytope $\Gamma(M) := \Gamma(S)$ is called a \textbf{matroid polytope}. Its edges correspond to pairs of bases $A, B \in \B(M)$ such that $A = B -i \cup j$ for $i \neq j$, called \textbf{adjacent bases}.

A \textbf{subdivision} of a polytope $P$ is a set of polytopes $S=\{P_1, \ldots, P_m\}$, whose vertices are vertices of $P$, such that $P_1 \cup \cdots \cup P_m = P$, and if an intersection $P_i \cap P_j$ is nonempty then it is a proper face of both $P_i$ and $P_j$. Any vector $p \in \T^{\binom{[n]}{m}}$ induces a polytopal subdivision of $\Gamma := \Gamma(\supp(p))$ as follows. The vector $p \in \T^{\binom{[n]}{m}}$ can be thought of as a height function on the vertices of $\Gamma$, giving rise to the ``lifted polytope'' $\Gamma(p):= \convex \{ (e_S, p_S) \in \R^{n+1} \mid S \in \supp(p)\}$. Projecting the lower facets of $\Gamma(p)$ (i.e., its facets whose outward normal vector has a negative $(n+1)$st coordinate) back to $\R^n$, we get a polytopal subdivision $\mathcal{D}_p$ of $\Gamma$, called the \textbf{regular subdivision} induced by $p$. 

Tropical Pl\"ucker vectors admit a beautiful characterization in this language (see \cite{speyer, isotropical}): A vector $p \in \T^{\binom{[n]}{m}}$ is a tropical Pl\"ucker vector if and only if the regular subdivision $\mathcal{D}_p$ is a \textbf{matroid polytope subdivision}, i.e., it is a subdivision of a matroid polytope into matroid polytopes.

Let $p \in \T^{\binom{[n]}{m}}$ be a tropical Pl\"ucker vector. For $S \in \binom{[n]}{m+1}$, let $c_S \in \T^n$ be defined by
 \begin{equation}\label{defcircuit}
  (c_S)_i := 
  \begin{cases}
   p_{S-i} & \text{ if $i \in S$,} \\
   \infty & \text{ otherwise.}
  \end{cases}
 \end{equation}
If $c_S$ is not equal to $\vec{\infty} := (\infty, \dotsc, \infty) \in \T^n$, any vector of the form $c_S + \lambda \cdot \textbf{1}$ with $\lambda \in \R$ is called a \textbf{(valuated) circuit} of $p$, where $\textbf{1} := (1,\dotsc,1) \in \R^n$. Its support $\supp(c_S + \lambda \cdot \textbf{1}) = \supp(c_S) := \{ i \in [n] \mid (c_S)_i \neq \infty \}$ is a circuit of the underlying matroid $M$ of $p$. It can be proved (see \cite{murotatamura, isotropical}) that if two circuits of $p$ have the same support then they differ by a scalar multiple of the vector $\textbf{1}$, that is, the two circuits are the same in tropical projective space $\TP^{n-1} := (\T^n - \vec{\infty}) / \R \cdot \textbf{1}$.

For any basis $B \subseteq [n]$ of $M$ and any $e \in [n]$ there is a unique circuit of $M$ contained in $B \cup e$ (containing the element $e$), which is called the \textbf{fundamental circuit} $C(e,B)$ of $e$ over $B$. If the support of $c_S$ is a fundamental circuit of $M$ over some basis $B$ then we say that $c_S$ is a \textbf{(valuated) fundamental circuit} of $p$ over the basis $B$.  

Valuated circuits satisfy the following valuated elimination property, which generalizes the classical elimination axiom for circuits of a matroid. For any two vectors $d, e \in \T^n$, we denote by $\min(d,e) \in \T^n$ the corresponding vector of coordinate-wise minima. 
\begin{proposition}[\cite{murotatamura}]\label{valuatedelimination}
 If $d, e \in \T^n$ are two circuits of $p$ and $a,b \in [n]$ are such that $d_a < e_a$ and $d_b = e_b \neq \infty$, then there exists a circuit $f \in \T^n$ of $p$ satisfying $f_b = \infty$, $f_a = d_a$, and $f \geq \min(d,e)$.
\end{proposition}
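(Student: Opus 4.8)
The plan is to reproduce, at the level of the tropical Pl\"ucker relations, the classical linear-algebraic proof of circuit elimination. Thinking of $d$ and $e$ as tropicalizations (valuations of coordinates) of two kernel vectors $D, E$ of a matrix representing $p$, the natural candidate is the combination $F := D - (D_b/E_b)\,E$, which annihilates coordinate $b$ exactly and whose remaining coordinates satisfy $\operatorname{val}(F_i) \geq \min(d_i, e_i)$, with equality forced at $a$ because the strict inequality $d_a < e_a$ forbids cancellation there. This predicts that the eliminated circuit should satisfy $f_b = \infty$, $f_a = d_a$, and $f \geq \min(d,e)$ coordinatewise, which is exactly the claim. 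Since $p$ need not be realizable, the real work is to extract such an $f$ purely from the fact that $\mathcal{D}_p$ is a matroid subdivision, equivalently from the Pl\"ucker relations.

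First I would normalize by writing $d = c_S + \lambda\textbf{1}$ and $e = c_T + \mu\textbf{1}$ for $(m+1)$-sets $S \supseteq \supp(d)$ and $T \supseteq \supp(e)$. Note that $a \neq b$ (else $d_a < e_a$ and $d_a = e_a$ simultaneously), that $b \in S \cap T$ with $p_{S-b}, p_{T-b} \neq \infty$, and that $a \in S$ with $p_{S-a} \neq \infty$. The key computation is to apply the Pl\"ucker relation to the pair $R := S \setminus \{a,b\} \in \binom{[n]}{m-1}$ and $Q := T \in \binom{[n]}{m+1}$, so that the minimum $\min_{i \in T \setminus R}(p_{R \cup i} + p_{T - i})$ is attained at least twice (or is $\infty$). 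Here the index $i = b$ contributes $p_{R\cup b}+p_{T-b} = p_{S-a} + p_{T-b}$, and, when $a \in T$, the index $i = a$ contributes $p_{R\cup a}+p_{T-a}=p_{S-b} + p_{T-a}$; since $d_b = e_b$ while $d_a < e_a$, the difference of these two terms is $(d_b-e_b)+(e_a-d_a) > 0$, so the $i=b$ term is \emph{strictly} smaller than the $i=a$ term. Hence the doubly-attained minimum cannot be pinned at $\{a,b\}$ alone and must involve a further index $i_0 \in T \setminus S$; this collision is the Pl\"ucker witness that $T$ can be exchanged to a set avoiding $b$ without disturbing the relevant coordinates, and the same term comparison yields the coordinatewise bound $f \geq \min(d,e)$.

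The main obstacle I anticipate is the bookkeeping between valuated circuits and the $(m+1)$-sets indexing them: a single application of the relation only controls a vector whose support lies in $(\supp d \cup \supp e) - b$, and one must certify that its minimal support carries a \emph{genuine} valuated circuit realizing the exact coordinates $f_b = \infty$ and $f_a = d_a$. I would resolve this by induction on $|\supp d \cup \supp e|$, using the collision at $i_0$ to replace $T$ by $T - b + i_0$, which strictly shrinks the union while preserving the $a$-coordinate, and iterating until $b$ is purged; the strict inequality $d_a < e_a$ guarantees the $a$-coordinate never inflates past $d_a$ at any stage. This is precisely the valuated strong circuit elimination of Murota and Tamura \cite{murotatamura}, and one may alternatively deduce it from the valuated basis-exchange axiom through the standard dictionary between circuits, cocircuits, and the tropical Pl\"ucker relations.
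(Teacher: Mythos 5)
First, a remark on the comparison you were asked for: the paper does not prove Proposition \ref{valuatedelimination} at all --- it is imported from \cite{murotatamura} as a black box --- so your attempt has to stand entirely on its own. Your opening computation is correct and well set up: $a \neq b$, both lie in $S$, and applying the tropical Pl\"ucker relation to the pair $\bigl(S \setminus \{a,b\},\, T\bigr)$ does show that the term at $i=b$, namely $p_{S-a}+p_{T-b}$, is finite and (when $a \in T$) strictly smaller than the term at $i=a$, which equals $p_{S-b}+p_{T-a}$; hence the twice-attained minimum forces some $i_0 \in T \setminus S$ with $p_{(S\setminus\{a,b\})\cup i_0}+p_{T-i_0} \leq p_{S-a}+p_{T-b}$. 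That is one genuine, relevant inequality.

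Beyond that point, however, the argument has real gaps. (1) The induction step is ill-formed: since $i_0 \in T \setminus S \subseteq T$, the set $T - b + i_0$ is just $T-b$, which has $m$ elements and does not index a valuated circuit; and even under a charitable repair (say, replacing $S$ by $S - b \cup i_0$), the proposed measure $|\supp d \cup \supp e|$ does not visibly decrease, because $b$ still lies in $T$. (2) The conclusion $f \geq \min(d,e)$ is the heart of the proposition, and your single Pl\"ucker relation only controls the one coordinate $a$ of the candidate circuit (it bounds $p_{(S\setminus\{a,b\})\cup i_0}$ from above). To control every coordinate $i$ of $f$ from \emph{below} by $\min(d_i,e_i)$ you need a separate exchange inequality for each $i$, i.e.\ a whole family of Pl\"ucker relations applied to varying $(m-1)$-sets, together with a consistent choice of the $(m+1)$-set indexing $f$ and of the additive normalization; none of this is carried out, it is only asserted that ``the same term comparison yields the coordinatewise bound.'' (3) The exact equality $f_a = d_a$ would follow from combining the upper bound in (1) with the lower bound in (2), but since (2) is missing, so is the equality. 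The closing appeal to ``the valuated basis-exchange axiom through the standard dictionary'' is a citation of the result you are trying to prove, not a proof. In short: correct first step, but the elimination/induction machinery that would finish the argument is not actually constructed.
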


Two vectors $x, y \in \T^n$ are said to be \textbf{tropically orthogonal}, denoted by $x \top y$, if the minimum $\min (x_1 + y_1, \dotsc , x_n + y_n)$ is achieved at least twice (or it is equal to $\infty$). If $X \subseteq \T^n$ then its \textbf{tropically orthogonal set} is $X^\top := \{ y \in \T^n \mid y \top x \text{ for all } x \in X \}$.

If $p \in \T^{\binom{[n]}{m}}$ is a tropical Pl\"ucker vector, denote by $\C(p) \subseteq \T^n$ the set of all circuits of $p$. The space $L(p) := \C(p)^\top \subseteq \T^n$ is called the \textbf{tropical linear space} associated to $p$. Tropical linear spaces were introduced and studied by Speyer in \cite{speyer}. 

As we briefly discussed in the introduction, tropical linear spaces play a very important role in tropical geometry. Let $K := \puiseux$ be the field of Puiseux series, and consider the $n$-dimensional vector space $V:=K^n$. Suppose $W$ is an $m$-dimensional linear subspace of $V$ with Pl\"ucker coordinates $P \in K^{\binom{[n]}{m}}$. Let $p \in \T^{\binom{[n]}{m}}$ be the valuation of the vector $P$. Since $P$ satisfies the Pl\"ucker relations, the vector $p$ is a tropical Pl\"ucker vector. In this case, the tropicalization of the linear space $W$ is precisely the tropical linear space $L(p)$ \cite{speyer}.

We now define the main object of study in our paper.
\begin{definition}
 Let $p \in \T^{\binom{[n]}{m}}$ be a tropical Pl\"ucker vector with underlying matroid $M$. If $B$ is a basis of $M$ and $v \in \R^n$, we define the \textbf{$v$-weight} of $B$ (with handicap $p$) to be $w_p(v,B) := -p_B + \sum_{i \in B} v_i$. For any $v \in \R^n$, the collection of bases of $M$ with maximal $v$-weight is the collection of bases of a matroid $M_v$ on the ground set $[n]$. In the notation we introduced above for regular matroid subdivisions, $M_v$ is the matroid corresponding to the face of $\mathcal{D}_p$ obtained as the projection of the face of $\Gamma(p)$ that maximizes the dot product with the vector $(v, -1) \in \R^{n+1}$. 
 
 Now, for any basis $B$ of $M$, denote by $\Sigma_B$ the set of vectors $v \in \R^n$ such that $M_v$ contains the basis $B$. The \textbf{local tropical linear space} $L(p)_B$ is defined as $L(p)_B := L(p) \cap \Sigma_B$.
\end{definition}

Suppose $p \in \R^{\binom{[n]}{m}}$ is a tropical Pl\"ucker vector (with no coordinates equal to $\infty$), so its underlying matroid is the uniform matroid $U_{m,n}$. The vector $p$ induces then a regular matroid subdivision $\mathcal{D}_p$ of the hypersimplex $\Delta_{m,n}$. It was shown in \cite{speyer} that the tropical linear space $L(p) \cap \R^n$ consists of all vectors $v \in \R^n$ such that $M_v$ is a loopless matroid. In particular, $L(p) \cap \R^n$ is a polyhedral complex dual to the faces of $\mathcal{D}_p$ that correspond to loopless matroids. If $B$ is a basis of the uniform matroid $U_{m,n}$ then the local tropical linear space $L(p)_B$ consists of the cells of $L(p) \cap \R^{n}$ which are dual to faces of $\mathcal{D}_p$ that correspond to loopless matroids and \emph{contain the vertex $e_B$}. All these results hold more generally for any tropical Pl\"ucker vector in $\T^{\binom{[n]}{m}}$, as we will show in Corollary \ref{dualloopless}.

\begin{example}\label{exlocal}
Let $n=4$, $m=2$, and consider the vector $p \in \R^{\binom{[n]}{m}}$ defined as
\[
 p_S := 
 \begin{cases}
  1 & \text{ if $S=12$ or $S=34$,} \\
  0 & \text{ if $S=13$ or $S=14$ or $S=23$ or $S=24$.}
 \end{cases}
\]
The hypersimplex $\Delta_{2,4}$ is the convex hull of all $0/1$ vectors in $\R^4$ having exactly two coordinates equal to $1$. This polytope lives in the $3$-dimensional hyperplane defined by $x_1 + x_2 + x_3 + x_4 = 2$, and is in fact a regular octahedron. The regular subdivision $\mathcal{D}_p$ induced by $p$ is consists of two square pyramids meeting at their base, as depicted in the left of Figure \ref{figlinearspace}. Since all faces of this subdivision are matroid polytopes, the vector $p$ is a tropical Pl\"ucker vector. The tropical linear space $L(p) \cap \R^n$ is then dual to all the faces of this subdivision which correspond to loopless matroids, as drawn in green and red on the right side of Figure \ref{figlinearspace}. The local tropical linear space around the basis $B = 14$ consists of the cells of $L(p) \cap \R^n$ that are dual to faces of $\mathcal{D}_p$ containing the vertex $e_{14}$, which are precisely the green cells in the picture. Note that this local tropical linear space is 
homeomorphic to $\R^2$ (before modding out by the lineality space generated by the vector $\textbf{1} \in \R^4$), as we will later generalize in Theorem \ref{localtropical}.
\begin{figure}[ht]
\begin{center}
\includegraphics[scale=0.6]{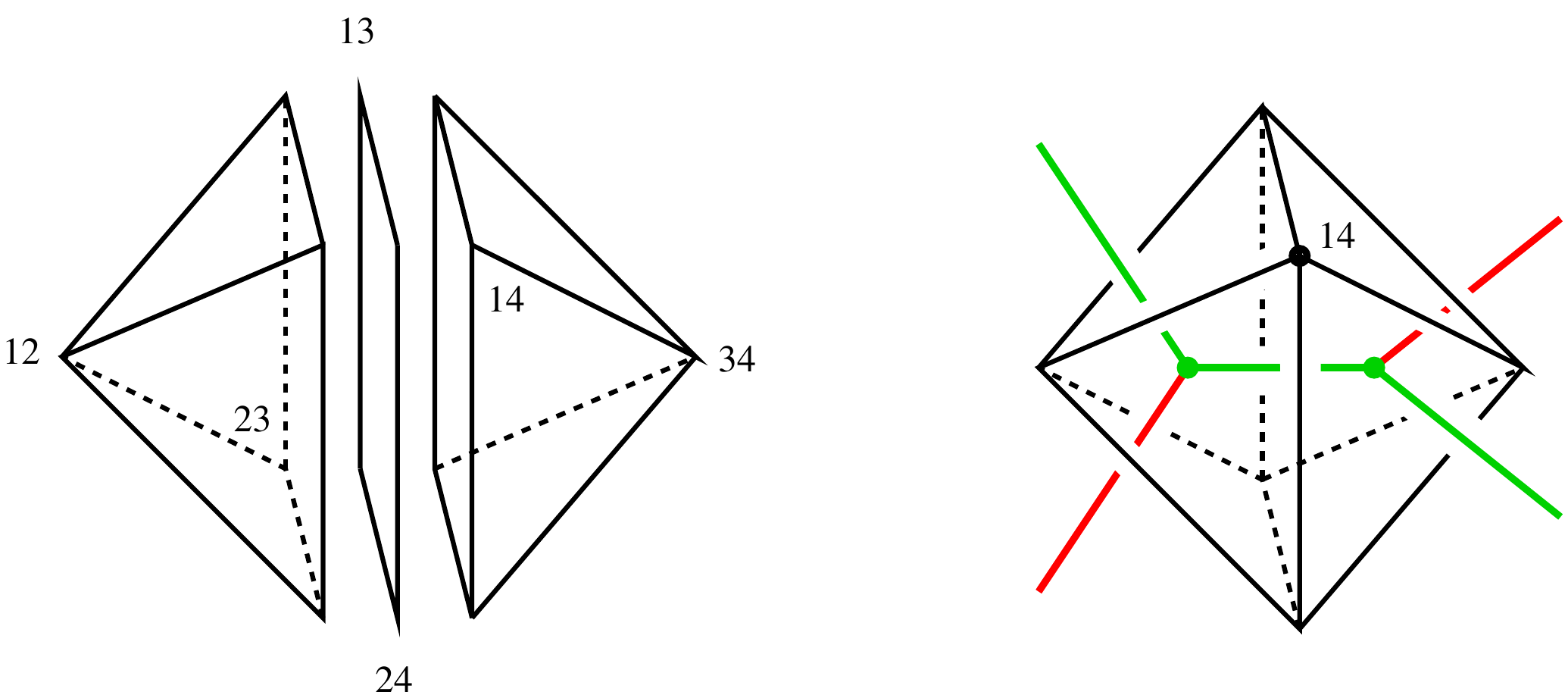}
\caption{At the left, the regular matroid subdivision induced by the tropical Pl\"ucker vector of Example \ref{exlocal}. At the right, its associated tropical linear space inside the hypersimplex, with the green subcomplex corresponding to the local tropical linear space around the basis $\{1,4\}$.}
\label{figlinearspace}
\end{center}
\end{figure}
\end{example}

Any tropical linear space $L(p) \cap \R$ is the union of all its tropical linear spaces $L(p)_B$, so we can attempt to understand tropical linear spaces by studying them locally.

\begin{lemma}\label{onlyfundamental}
 Let $p \in \T^{\binom{[n]}{m}}$ be a tropical Pl\"ucker vector, and let $B$ be a basis of its underlying matroid $M$. For any $v \in \Sigma_B$, $v$ is in the local tropical linear space $L(p)_B$ if and only if $v$ is tropically orthogonal to all \emph{fundamental circuits} of $p$ over the basis $B$.
\end{lemma}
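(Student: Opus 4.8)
The forward direction is immediate: if $v \in L(p)_B = L(p) \cap \Sigma_B$ then $v$ is tropically orthogonal to \emph{every} circuit of $p$, in particular to every fundamental circuit over $B$. All of the content lies in the converse, and the plan is to pass from circuits to bases via a weight reformulation of tropical orthogonality. For $S \in \binom{[n]}{m+1}$ with $c_S \neq \vec{\infty}$, expanding $(c_S)_i = p_{S-i}$ and using $w_p(v, S-i) = -p_{S-i} + \sum_{x \in S-i} v_x$ yields the identity $v_i + (c_S)_i = \sum_{x \in S} v_x - w_p(v, S-i)$, valid for all $i \in S$ (with $S-i$ a basis exactly when $i$ lies in the unique circuit $C := \supp(c_S) \subseteq S$). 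Hence $v \top c_S$ if and only if the maximal $v$-weight among the bases $\{S-i : i \in C\}$ contained in $S$ is attained at least twice. Since every circuit of $p$ occurs as $\supp(c_S)$ for such an $S$, and orthogonality is invariant under the scaling $c_S \mapsto c_S + \lambda \cdot \textbf{1}$, proving the lemma reduces to establishing this two-fold-maximum condition for all $S$.

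So I would suppose $v \in \Sigma_B$ is tropically orthogonal to all fundamental circuits over $B$, fix $S$, and argue by contradiction that the $v$-weight is maximized among bases in $S$ by a \emph{unique} basis $B^* = S - a$, so that $w_p(v, S-a) > w_p(v, S-j)$ for every $j \in C \setminus \{a\}$. The first step is to locate $a$ inside a globally maximal basis. If $a \in B$ this is automatic, since $v \in \Sigma_B$ makes $B$ a basis of $M_v$ containing $a$. If $a \notin B$, then applying the same weight identity to the fundamental circuit $c_{B \cup a}$ shows that the defining inequalities of $\Sigma_B$ place the minimum of $v + c_{B \cup a}$ at the coordinate $a$, so orthogonality produces a second minimizer $j$ with $w_p(v, B - j \cup a) = w_p(v, B)$; thus $B - j \cup a$ is a maximal-weight basis containing $a$. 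Either way $a$ is not a loop of $M_v$, and there is a basis $B_0$ of $M_v$ with $a \in B_0$ and $w_p(v, B_0)$ globally maximal.

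The second step applies the tropical Pl\"ucker relation \eqref{eqplucker} to the pair $S' := B_0 \setminus a$ (of size $m-1$) and $T := S$ (of size $m+1$). Translating each term $p_{B_0 - a + i} + p_{S-i}$ back into weights through the identity above shows that $\max_i\,[\,w_p(v, B_0 - a + i) + w_p(v, S - i)\,]$ is attained at least twice over $i \in S \setminus (B_0 \setminus a)$. At $i = a$ this quantity equals $w_p(v, B_0) + w_p(v, B^*)$, and it is dominated by this value for every $i$, because $w_p(v, B_0 - a + i) \le w_p(v, B_0)$ by global maximality of $B_0$ and $w_p(v, S - i) \le w_p(v, B^*)$ by our assumption on $S$. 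A second maximizing index $j \neq a$ then forces $w_p(v, S - j) = w_p(v, B^*)$ with $j \in C$, contradicting the uniqueness of the maximizer $a$ and completing the proof.

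The main obstacle is exactly the gap between local and global maximality: the basis $S - a$ that maximizes the weight among bases contained in $S$ need not maximize it globally, so one cannot simply exchange within $M_v$. Bridging this gap is the role of the globally maximal basis $B_0$ through $a$ and of the Pl\"ucker relation, whose two-fold minimum transfers the global information at $B_0$ down to a tie among the bases inside $S$. I expect the cleanest packaging to observe that the first step identifies ``$v \in \Sigma_B$ and orthogonal to the fundamental circuits over $B$'' with ``$v \in \Sigma_B$ and $M_v$ loopless'' (anticipating Corollary \ref{dualloopless}), after which the Pl\"ucker argument shows that looplessness already forces orthogonality to every circuit; the symmetric exchange used here can alternatively be extracted from the valuated elimination of Proposition \ref{valuatedelimination}.
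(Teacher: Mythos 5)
Your proof is correct, but it takes a genuinely different route from the paper's. The paper argues by contradiction on a circuit $d$ not orthogonal to $v$ whose support has the fewest elements outside $B$: it normalizes $d$ against the fundamental circuit $e = c_{B\cup b}$ of an element $b \in \supp(d)\setminus B$, and invokes the valuated elimination property (Proposition \ref{valuatedelimination}) to produce a new non-orthogonal circuit $f$ with $f_b = \infty$, contradicting minimality. You instead bypass elimination entirely and work with the three-term tropical Pl\"ucker relations \eqref{eqplucker} directly, after translating tropical orthogonality to $c_S$ into the statement that the maximal $v$-weight among the bases $\{S-i : i \in \supp(c_S)\}$ is attained twice. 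Your two steps check out: orthogonality to the fundamental circuits over $B$ (together with $v \in \Sigma_B$) yields a globally $v$-maximal basis $B_0$ containing the putative unique maximizer $a$ --- this is in substance the looplessness argument the paper only gives later, in Corollary \ref{dualloopless} --- and the Pl\"ucker relation applied to $(B_0\setminus a,\, S)$, read in weight form, forces a second tie inside $S$ because the term at $i=a$ dominates all others. In effect you prove Corollary \ref{dualloopless} first and then show that looplessness of $M_v$ implies orthogonality to every circuit, reversing the paper's logical order; what this buys is a proof that stays in the basis-weight/matroid-subdivision picture and needs only the Pl\"ucker relations as input, whereas the paper's proof stays in the circuit axiomatization and leans on the (imported) elimination axiom. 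The one point worth making explicit in a write-up is that all statements of the form $w_p(v, S-i) \le w_p(v, B^*)$ must be interpreted with the convention that non-bases have weight $-\infty$ (equivalently, that terms with $p_{S-i}=\infty$ cannot realize the finite minimum in \eqref{eqplucker}), which is what guarantees your second maximizer $j$ actually lies in the circuit $C$.
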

\begin{proof}
 By definition, if $v \in L(p)$ then $v$ is tropically orthogonal to all circuits of $p$, in particular to all fundamental circuits over the basis $B$. To prove the converse, assume by contradiction that $v \in \Sigma_B$ is tropically orthogonal to all fundamental circuits of $p$ over the basis $B$ but $v$ is not in $L(p)_B$, thus $v$ is not in $L$. Let $d$ be a circuit of $p$ which is not tropically orthogonal to $v$, and take $d$ such that the circuit $D := \supp (d)$ of the underlying matroid $M$ contains as few elements outside of $B$ as possible. Since $D$ is not a fundamental circuit over $B$, it contains at least two elements not in $B$. Let $a \in D$ be the unique element such that $d_a + v_a = \min \{d_i + v_i \mid i \in [n]\}$, and let $b \in D - B$ be different from $a$. After adding a suitable scalar multiple of the vector $\textbf{1}$, we can assume that $d_b = p_B$. Let $S := B \cup b$, and consider the circuit $e := c_S$ of $p$, as defined in Equation \eqref{defcircuit}. Its support $\supp(e)$ 
is the fundamental circuit $E := C(b, B)$ of $b$ over $B$ in the matroid $M$. Since $v \in \Sigma_B$, it follows that 
 \begin{equation}\label{eqineq}
  e_b + v_b = p_B + v_b \leq p_{S - i} + v_i = e_i + v_i
 \end{equation}
 for any $i \in E$. Note that in fact this inequality holds for any $i \in [n]$. We thus have $d_a + v_a < d_b + v_b = p_B + v_b \leq e_a + v_a$, so $d_a < e_a$. Applying Proposition \ref{valuatedelimination}, we get that there is a circuit $f$ of $p$ such that $f_b = \infty$, $f_a = d_a$, and $f \geq \min(d,e)$, where $\min(d,e) \in \T^n$ denotes the corresponding vector of coordinate-wise minima. We have
 \begin{equation*}
  f_a + v_a = d_a + v_a < d_i + v_i
 \end{equation*}
 for any $i \in [n]$ different from $a$, and therefore
 \begin{equation*}
  f_a + v_a < d_b + v_b = e_b + v_b \leq e_i + v_i
 \end{equation*}
 for any $i \in [n]$ (the last inequality in the previous line comes from \eqref{eqineq}). Since $f \geq \min(d,e)$, these last two inequalities imply that $f_a + v_a < f_i + v_i$ for any $i \in [n]$ different from $a$, so $\min_{i \in [n]}(f_i + v_i)$ is achieved only once at $i = a$. But this means that $f$ is a circuit of $p$ which is not tropically orthogonal to $v$, and whose support $F:= \supp(f)$ has fewer elements outside of $B$ than the circuit $D$ (since $F \subseteq D \cup E - b \subseteq B \cup D - b$), which contradicts our choice of $d$.
\end{proof}

Lemma \ref{onlyfundamental} can be stated in polyhedral terms in the following way. It generalizes to arbitrary tropical Pl\"ucker vectors in $\T^{\binom{[n]}{m}}$ the combinatorial description of tropical linear spaces given by Speyer in \cite{speyer}.

\begin{corollary}\label{dualloopless}
 Let $p \in \T^{\binom{[n]}{m}}$ be a tropical Pl\"ucker vector with underlying matroid $M$. A vector $v \in \R^n$ is in the tropical linear space $L(p)$ if and only if $M_v$ is a loopless matroid. In particular, $L(p) \cap \R^n$ is a polyhedral complex dual to the faces of the regular subdivision $\mathcal{D}_p$ that correspond to loopless matroids. 
\end{corollary}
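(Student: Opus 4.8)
The plan is to deduce the biconditional element by element, using Lemma \ref{onlyfundamental} to reduce orthogonality to the fundamental circuits over a single fixed basis. First recall that $\R^n = \bigcup_B \Sigma_B$ and $L(p)_B = L(p) \cap \Sigma_B$, so $L(p) \cap \R^n = \bigcup_B L(p)_B$; hence for a given $v$ it suffices to work with one basis $B$ of $M_v$ (so that $v \in \Sigma_B$) together with the fundamental circuits $c_{B \cup e}$, $e \notin B$, whose supports are the fundamental circuits $C(e,B)$ of $M$.

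The core is a local translation of the condition $v \in \Sigma_B$. For $e \notin B$ and $i \in C(e,B)$ the set $(B \cup e) - i$ is a basis of $M$, and the inequality $w_p(v,(B\cup e)-i) \le w_p(v,B)$ expressing maximality of $B$ is, after rearranging, exactly inequality \eqref{eqineq}: $(c_{B\cup e})_e + v_e \le (c_{B\cup e})_i + v_i$. Since $(c_{B\cup e})_i = \infty$ for $i \notin C(e,B)$, this says the minimum $\min_i\bigl((c_{B\cup e})_i + v_i\bigr)$ is attained at $i = e$. Consequently $v \top c_{B\cup e}$ holds if and only if this minimum is attained a second time, necessarily at some $i \in C(e,B) - e$; and the equality $(c_{B\cup e})_e + v_e = (c_{B\cup e})_i + v_i$ is equivalent to $w_p(v,(B\cup e)-i) = w_p(v,B)$, i.e.\ to $(B\cup e)-i$ being a maximal-weight basis, hence a basis of $M_v$ containing $e$. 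Thus $v \top c_{B\cup e}$ if and only if $e$ lies in some basis of $M_v$, i.e.\ $e$ is a non-loop of $M_v$. For the implication that produces a second minimizer out of looplessness I would use the matroid fact that, for a basis $B$ of $M_v$ with $e \notin B$ and $e$ a non-loop of $M_v$, the fundamental circuit of $e$ over $B$ computed in $M_v$ is contained in $C(e,B)$, so any $i \ne e$ in it furnishes the required maximal-weight basis $(B\cup e)-i$.

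With this equivalence both directions follow. If $v \in L(p) \cap \R^n$ then $v \in \Sigma_B$ for some basis $B$ of $M_v$ and $v$ is orthogonal to every fundamental circuit over $B$; the equivalence shows every $e \notin B$ is a non-loop of $M_v$, while every $e \in B$ is trivially a non-loop, so $M_v$ is loopless. Conversely, if $M_v$ is loopless, choose any basis $B$ of $M_v$; then $v \in \Sigma_B$, every $e \notin B$ is a non-loop, so $v$ is orthogonal to all fundamental circuits over $B$, and Lemma \ref{onlyfundamental} gives $v \in L(p)_B \subseteq L(p)$. The ``in particular'' clause is then immediate: since $M_v$ is by definition the matroid indexing the face of $\mathcal{D}_p$ selected by $(v,-1)$, the loci on which $M_v$ is constant are the relatively open cells of a polyhedral complex dual to $\mathcal{D}_p$, and the biconditional identifies $L(p)\cap\R^n$ with the subcomplex of those cells dual to loopless faces.

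I expect the main obstacle to be the matroid bookkeeping in the middle step --- verifying the containment $C_{M_v}(e,B) \subseteq C(e,B)$ that produces the second minimizer, and handling cleanly the degenerate case in which $e$ is already a loop of $M$: there $c_{B\cup e}$ is supported on the single element $e$, so $v$ is never tropically orthogonal to it and $e$ is a loop of every $M_v$, so both sides of the equivalence fail and the argument goes through vacuously.
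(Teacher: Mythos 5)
Your proof is correct and follows essentially the same route as the paper's: reduce via Lemma \ref{onlyfundamental} to the fundamental circuits over a maximal-$v$-weight basis $B$, and translate tropical orthogonality to $c_{B\cup e}$ into the existence of a second maximal-weight basis $B\cup e - b$, hence into $e$ being a non-loop of $M_v$. The only difference is that you spell out the matroid bookkeeping (the containment $C_{M_v}(e,B)\subseteq C(e,B)$ and the case of a loop of $M$) that the paper leaves implicit in its final ``It follows that'' step.
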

\begin{proof}
 Suppose $B$ is a basis of $M$ and $a \in [n] \setminus B$. The support of the (valuated) circuit $c := c_{B \cup a} \in \T^n$ of $p$ is the fundamental circuit of $a$ over the basis $B$. A vector $v \in \R^n$ is tropically orthogonal to $c$ if and only if $\min_{b \in B \cup a} \, p_{B \cup a - b} + v_b$ is achieved at least twice. Equivalently, $v$ is tropically orthogonal to $c$ if and only if $\max_{b \in C(a,B)} \, w_p(v,B \cup a - b)$ is achieved at least twice, where $C(a,B)$ denotes the fundamental circuit of $a$ over the basis $B$.
 
 Now, let $v \in \R^n$ and take $B$ a basis of $M$ of maximal $v$-weight. According to Lemma \ref{onlyfundamental}, $v \in L(p)$ if and only if $v$ is tropically orthogonal to all fundamental circuits of $p$ over the basis $B$. Our discussion above implies that this is the case if and only if for any $a \in [n] \setminus B$ there exists $b \in B$ such that $B \cup a - b$ is also a basis of maximal $v$-weight. It follows that $v \in L(p)$ if and only if the matroid $M_v$ has no loops, as desired.  
\end{proof}

We use Lemma \ref{onlyfundamental} to construct a piecewise linear homeomorphism between $m$-dimensional Euclidean space and the local tropical linear space $L(p)_B$, generalizing Theorem 4.2 in \cite{feichtnersturmfels}.
\begin{theorem}\label{localtropical}
 Any $m$-dimensional local tropical linear space $L(p)_B$ is homeomorphic to $\R^m$. More specifically, if $B = \{ b_1, \dotsc , b_m\} \subseteq [n]$ then the function $f_B:\R^m \to \R^n$ sending a vector $x = (x_1,\dotsc, x_m) \in \R^m$ to the vector $f_B(x) \in \R^n$ defined by
 \[
  (f_B(x))_i := 
  \begin{cases}
   x_j & \text{if $i = b_j$ for some $j$,} \\
   \displaystyle{\min_{b_j \in C(i,B) - i}} x_j + p_{B \cup i - b_j} - p_B & \text{if $i \in [n] \setminus B$;}
  \end{cases}
 \]
 is a piecewise linear homeomorphism between $\R^m$ and $L(p)_B$.
\end{theorem}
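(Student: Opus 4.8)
The plan is to show that $f_B$ is a well-defined piecewise linear map whose image is exactly $L(p)_B$, and that it admits a continuous inverse. The natural candidate for the inverse is the projection $\pi_B \colon \R^n \to \R^m$ onto the coordinates indexed by $B$, which sends $v$ to $(v_{b_1}, \dotsc, v_{b_m})$. My first step is to verify that $f_B$ is well defined (the minimum defining $(f_B(x))_i$ runs over a nonempty set, since $C(i,B)-i \subseteq B$ is nonempty for $i \in [n] \setminus B$) and that it is piecewise linear, being a coordinatewise minimum of affine functions of $x$. Since $\pi_B \circ f_B = \mathrm{id}_{\R^m}$ by construction, $f_B$ is automatically injective; the real content is that its image is precisely $L(p)_B$.

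To prove $\operatorname{Im}(f_B) \subseteq L(p)_B$, I would take an arbitrary $x \in \R^m$, set $v := f_B(x)$, and check two things using the earlier results. First, that $v \in \Sigma_B$, i.e. $B$ is a basis of maximal $v$-weight: the definition of $(f_B(x))_i$ for $i \notin B$ forces, for each such $i$, the inequality $p_B + v_i \le p_{B \cup i - b_j} + v_{b_j}$ for all $b_j \in C(i,B)-i$, with equality for the minimizing $b_j$. Translating through the $v$-weight formula $w_p(v,B') = -p_{B'} + \sum_{k \in B'} v_k$, this says exactly that for every adjacent basis $B \cup i - b_j$ we have $w_p(v,B) \ge w_p(v, B\cup i - b_j)$, and since every basis of $M$ is connected to $B$ by a sequence of basis exchanges along edges of $\Gamma(M)$, a standard greedy/exchange argument upgrades these local inequalities to global maximality of $w_p(v,B)$. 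Second, the equality case built into the definition means that for each $i \notin B$ the maximum $\max_{b \in C(i,B)} w_p(v, B \cup i - b)$ is attained at least twice (once at the removed element giving $B$ itself and once at the minimizing $b_j$), which by the computation in the proof of Corollary \ref{dualloopless} is exactly tropical orthogonality of $v$ to the fundamental circuit over $B$. By Lemma \ref{onlyfundamental} this places $v$ in $L(p)_B$.

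For the reverse inclusion $L(p)_B \subseteq \operatorname{Im}(f_B)$, I would take $v \in L(p)_B$, set $x := \pi_B(v)$, and show $f_B(x) = v$. On the $B$-coordinates this is immediate. For $i \notin B$, the fact that $v \in \Sigma_B$ gives $v_i \le \min_{b_j \in C(i,B)-i} (v_{b_j} + p_{B \cup i - b_j} - p_B)$, while Lemma \ref{onlyfundamental} (via the orthogonality-attained-twice condition from Corollary \ref{dualloopless}) forces equality. Hence $v_i = (f_B(x))_i$, so $f_B \circ \pi_B$ is the identity on $L(p)_B$. Together with $\pi_B \circ f_B = \mathrm{id}$, this shows $f_B$ and $\pi_B|_{L(p)_B}$ are mutually inverse continuous bijections, and since both are piecewise linear (hence continuous) the homeomorphism follows.

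The step I expect to be the main obstacle is establishing that the local basis-exchange inequalities coming from the definition of $f_B$ actually certify global $v$-weight maximality of $B$, i.e. that $v \in \Sigma_B$; this requires knowing that adjacency in the matroid basis-exchange graph (equivalently, the edge structure of $\Gamma(M)$ described earlier) suffices to propagate the inequalities, which is where the matroid-polytope geometry does the work. The orthogonality bookkeeping in the second half is more mechanical once the correct reformulation from Corollary \ref{dualloopless} is in hand, but care is needed to check that the inequality in the definition of $f_B$ indeed holds for \emph{all} $i \in [n]$ and not merely the defining range, paralleling the remark after Equation \eqref{eqineq}.
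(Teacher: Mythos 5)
Your overall architecture matches the paper's proof: show $f_B(x) \in \Sigma_B$, observe that tropical orthogonality to the fundamental circuits over $B$ is built into the definition of $f_B$ so that Lemma \ref{onlyfundamental} gives $\Image(f_B) \subseteq L(p)_B$, and then get surjectivity by checking that for $v \in L(p)_B$ the minimum $\min_{b \in C(i,B)} (p_{B\cup i - b} + v_b)$ equals $p_B + v_i$. The orthogonality and surjectivity halves of your argument are correct and essentially identical to the paper's; your explicit use of the coordinate projection $\pi_B$ as the continuous inverse is a slightly cleaner way to phrase the homeomorphism.

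However, the step you yourself flag as the main obstacle --- upgrading the local inequalities $w_p(v,B) \geq w_p(v, B \cup i - b_j)$ to global maximality of $w_p(v,B)$ --- is a genuine gap as you have sketched it, and the mechanism you propose would not close it. Connectivity of the basis-exchange graph of $\Gamma(M)$ is useless here: you only possess inequalities comparing $B$ with \emph{its own} neighbors, so there is no chain of inequalities to propagate along a path of exchanges, and ``matroid-polytope geometry'' alone is not what makes the statement true. Indeed, for a general height function local maximality among adjacent bases does \emph{not} imply global maximality: take $n=4$, $m=2$, $v = 0$, $p_{12} = 0$, $p_{34} = -10$, and $p_S = 0$ otherwise; then $B = \{1,2\}$ has $v$-weight $0$, as do all four of its adjacent bases, yet $\{3,4\}$ has $v$-weight $10$. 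This $p$ is of course not a tropical Pl\"ucker vector, and that is exactly the point: the local-to-global implication is where the tropical Pl\"ucker relations must enter. The paper's proof does this by taking a putative basis $A$ with $w_p(v,A) > w_p(v,B)$ and $|A \setminus B|$ minimal, choosing $a \in A \setminus B$, and applying the three-term relation \eqref{eqplucker} with $S = A - a$ and $T = B \cup a$ to produce $b \in B$ with $p_A + p_B \geq p_{A - a \cup b} + p_{B - b \cup a}$; combined with the local inequality $w_p(v, B - b \cup a) \leq w_p(v,B)$ this forces $w_p(v, A - a \cup b) \geq w_p(v,A) > w_p(v,B)$, contradicting minimality of $|A \setminus B|$. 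To repair your proof you need to replace the ``standard greedy/exchange argument'' with this induction on $|A \setminus B|$ driven by the Pl\"ucker relations (or an equivalent use of the fact that $\mathcal{D}_p$ is a matroid subdivision).
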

\begin{proof}
 We first prove that the image of $f_B$ lies in $\Sigma_B$. Suppose $x \in \R^m$, and let $v := f_B(x) \in \R^n$. Assume by contradiction that there is a basis $A$ of $M$ such that $w_p(v,A) > w_p(v,B)$, and take $A$ such that $|A \setminus B|$ is minimal. Let $a \in A \setminus B$, and define $S := A -a$, $T := B \cup a$. Since $p$ is a tropical Pl\"ucker vector, the minimum in \eqref{eqplucker} is attained at least twice, so there exists a $b \in B$ such that $p_A + p_B \geq p_{A-a\cup b} + p_{B-b \cup a}$. Subtracting $\sum_{i \in A} v_i + \sum_{j \in B} v_j$ on both sides we get $w_p(v,A) + w_p(v,B) \leq w_p(v,A-a\cup b) + w_p(v, B-b\cup a)$. But the definition of $f_B$ implies that $w_p(v, B-b\cup a) \leq w_p(v, B)$, so it follows that $w_p(v,A) \leq w_p(v,A-a\cup b)$, contradicting our choice of $A$.
 
 Now, it follows directly from the definition that any vector in the image of $f_B$ is tropically orthogonal to all fundamental circuits of $p$ over the basis $B$, so Lemma \ref{onlyfundamental} ensures that the image of $f_B$ lies in $L(p)_B$. Also, $f_B$ is clearly an injective function. Moreover, if $v$ is any vector in $L(p)_B$ then for any $i \in [n] \setminus B$ we have
 \[
  \min_{b \in C(i,B)} p_{B \cup i - b} + v_b = p_B + v_i, 
 \]
 so it follows that $f_B$ is surjective onto $L(p)_B$.
\end{proof}

Together with the description of tropical linear spaces as tropical polytopes given in \cite{murotatamura, isotropical}, the homeomorphism described in Theorem \ref{localtropical} can be used to show that the tropical projection of a point $y \in \Sigma_B$ onto the tropical linear space $L$ is given by the map
\[
(\pi(y))_i = 
\begin{cases}
y_i & \text{if } i \in B,\\
\displaystyle{\min_{j \in C(i,B) - i}} y_j + p_{B \cup i - j} - p_B & \text{if $i \in [n] \setminus B$.}
\end{cases}
\]
This result is also described by Corel in \cite[Theorem 6]{corel}. Theorem \ref{localtropical} could also be used for efficiently computing local tropical linear spaces, and thus general tropical linear spaces, by generalizing the approach given in \cite{computing} to the non-constant coefficient case.

\section{Face posets and mixed subdivisions}\label{secmixed}

In this section we study the combinatorial structure of local tropical linear spaces, by relating them to mixed subdivisions of Minkowski sums of simplices. 

Let $p \in \T^{\binom{[n]}{m}}$ be a tropical Pl\"ucker vector with underlying matroid $M$, and let $\D := \mathcal{D}_p$ be the regular matroid subdivision induced by $p$ on the matroid polytope $\Gamma := \Gamma(M)$. Let $B$ be any basis of $M$. As discussed above, the local tropical linear space $L(p)_B$ is a polyhedral complex dual to the faces of $\D$ that contain the vertex $e_B$ and correspond to loopless matroids, so in order to study this local tropical linear space it is enough to study how the subdivision $\D$ looks ``around'' the vertex $e_B$. 

Denote by $\Gamma_B$ the subpolytope of $\Gamma$ obtained as the convex hull of all vertices adjacent to $e_B$, i.e., $\Gamma_B := \convex \{ e_A \mid A \in \B(M) \text{ and } |A \setminus B| = 1 \}$. Note that $\Gamma_B$ is the intersection of $\Gamma$ with the affine hyperplane $h_B := \{ \textbf{x} \in \R^n \mid \sum_{i \in B} x_i = m-1 \}$. The polytope $\Gamma_B$ is called the \textbf{vertex figure} of $\Gamma$ around the vertex $e_B$ (or around the basis $B$). The subdivision of $\Gamma_B$ obtained by intersecting the subdivision $\D$ with the polytope $\Gamma_B$ is called the \textbf{local subdivision} induced by $p$ (or induced by $\D$) around the basis $B$, and it is denoted by $\D_B$. 

For any $S \subseteq [n]$, consider the simplex $\Delta_S := \convex \{e_i \mid i \in S \} \subseteq \R^n$. If $j \in [n] \setminus B$, denote $I_j := \left\{ i \in B \mid B - i \cup j \in \B(M) \right\}$. We then have
\begin{align*}
 \Gamma_B &= \convex \{ e_B - e_i + e_j \mid j \in [n] \setminus B \text{ and } i \in I_j \} \\
 &\cong \convex\{ e_j + \Delta_{I_j} \mid j \in [n] \setminus B \}.
\end{align*}
This last polytope is called the \textbf{Cayley embedding} of the simplices $\{ \Delta_{I_j} \mid j \in [n] \setminus B \}$, and we will denote it by $\Lambda = \Cayley\{\Delta_{I_j} \mid j \in [n] \setminus B\} \subseteq \R^n$.

The \textbf{Cayley trick} allows us to encode any subdivision of $\Lambda$ using a mixed subdivision of the Minkowski sum of the simplices $\{ \Delta_{I_j} \mid j \in [n] \setminus B \}$. We now describe this procedure in our particular case, for a more general theory the reader is invited to see \cite{cayley} or \cite[Section 14]{permutohedra}. In order to simplify our notation, we assume without loss of generality that $B=\{1,\dotsc,m\}$. Suppose $\mathcal{S}$ is a subdivision of $\Lambda \subseteq \R^n$. Let $o := \frac{1}{n-m} (e_1 + \dotsc + e_{n-m}) \in \R^{n-m}$, and denote by $\R^m \times \{o\}$ the affine subspace of $\R^n$ consisting of all vectors whose last $n-m$ coordinates are exactly $o$. Intersecting the subdivision $\mathcal{S}$ with the subspace $\R^m \times \{o\} \cong \R^m$ we get a mixed subdivision of $\frac{1}{n-m} \cdot \sum_{j \in [n] \setminus B}\Delta_{I_j}$, which after scaling can be thought of as a mixed subdivision $\mathcal{M}$ of the Minkowski sum $\sum_{j \in [n] \setminus B}
\Delta_{I_j}$. Note that this procedure defines a bijection between the faces of $\mathcal{M}$ of dimension $d$ and the faces of $\mathcal{S}$ of dimension $d + (n-m-1)$ that are not contained in any of the subspaces $\R^m \times \{x_j = 0\}$, for $j \in [n] \setminus B$. Moreover, this bijection preserves inclusion, so it is an isomorphism between the face poset of the mixed subdivision $\mathcal{M}$ and the subposet of the face poset of the subdivision $\mathcal{S}$ consisting of all faces which are not contained in any $\R^m \times \{x_j = 0\}$.

If we apply the Cayley trick to the subdivision $\D_B$ of the polytope $\Gamma_B \cong \Lambda$, we obtain a mixed subdivision $\mathcal{M}_B$ of $\sum_{j \in [n] \setminus B}\Delta_{I_j}$ whose face poset is isomorphic to the subposet of the face poset of $\D_B$ consisting of all faces which are not contained in any $\R^m \times \{x_j = 0\}$, i.e., all faces of $\D_B$ that correspond to loopless matroids in the subdivision $\D$. We have thus proved the following result.

\begin{proposition}\label{propdual}
 The local tropical linear space $L(p)_B$ is combinatorially dual to the mixed subdivision $\mathcal{M}_B$ of the Minkowski sum $\sum_{j \in [n] \setminus B}\Delta_{I_j}$.
\end{proposition}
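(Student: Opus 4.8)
The plan is to assemble the proposition directly from the pieces already laid out in the paragraphs preceding it, since almost all the work has been front-loaded into the setup. The logical chain I would make explicit is: Corollary~\ref{dualloopless} tells us that $L(p)_B$ is a polyhedral complex dual to the faces of $\D$ that contain the vertex $e_B$ and correspond to loopless matroids; passing to the vertex figure $\Gamma_B$ and its induced local subdivision $\D_B$ records exactly this ``around $e_B$'' information, so the faces of $L(p)_B$ are in inclusion-\emph{reversing} bijection with the loopless faces of $\D_B$. Under the identification $\Gamma_B \cong \Lambda$ as the Cayley embedding of the simplices $\{\Delta_{I_j} \mid j \in [n]\setminus B\}$, the loopless faces are precisely those not contained in any coordinate subspace $\R^m \times \{x_j = 0\}$, because a loop in the corresponding matroid is exactly an element $j \in [n]\setminus B$ that fails to appear, i.e.\ a vanishing $x_j$-coordinate.

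First I would state the correspondence between loopless faces of $\D_B$ and faces omitting no coordinate direction, making sure the dictionary ``loop $\leftrightarrow$ missing ground-set element $\leftrightarrow$ face inside some $\R^m \times \{x_j=0\}$'' is spelled out, since this is the one substantive combinatorial identification. Then I would invoke the Cayley-trick isomorphism established in the previous paragraph: intersecting a subdivision $\mathcal{S}$ of $\Lambda$ with the affine slice $\R^m \times \{o\}$ yields a mixed subdivision of the Minkowski sum, and this operation is an \emph{inclusion-preserving} bijection between the faces of the mixed subdivision and the faces of $\mathcal{S}$ not contained in any $\R^m \times \{x_j=0\}$. Applying this to $\mathcal{S} = \D_B$ produces $\mathcal{M}_B$, whose face poset is isomorphic to the subposet of loopless faces of $\D_B$.

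Composing the two order isomorphisms, the face poset of $\mathcal{M}_B$ is isomorphic to the poset of loopless faces of $\D_B$, which in turn is anti-isomorphic (via duality) to the face poset of $L(p)_B$. Since combinatorial duality is exactly the statement that one face poset is the order-dual of the other, this gives the desired conclusion that $L(p)_B$ is combinatorially dual to $\mathcal{M}_B$.

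I do not expect a genuine obstacle here, as the proposition is essentially a bookkeeping summary of the construction; the one point requiring care is keeping track of which maps preserve inclusion and which reverse it, so that the final ``dual'' is correctly oriented. The Cayley-trick bijection preserves inclusion, while the duality between a tropical linear space and its matroid subdivision reverses it, and I would state both orientations explicitly rather than leave them implicit. Concretely, I would write: \emph{By Corollary~\ref{dualloopless} the faces of $L(p)_B$ are in inclusion-reversing bijection with the loopless faces of $\D_B$, and by the Cayley-trick isomorphism of the preceding paragraph these loopless faces form a poset isomorphic to the face poset of $\mathcal{M}_B$; hence $L(p)_B$ is combinatorially dual to $\mathcal{M}_B$, as claimed.}
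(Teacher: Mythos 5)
Your proposal is correct and follows essentially the same route as the paper, which proves this proposition by exactly the bookkeeping you describe: the duality of $L(p)_B$ with the loopless faces of $\D$ containing $e_B$ (via Corollary \ref{dualloopless}), the identification of $\Gamma_B$ with the Cayley embedding, and the inclusion-preserving Cayley-trick bijection between faces of $\mathcal{M}_B$ and faces of $\D_B$ not contained in any $\R^m \times \{x_j = 0\}$. Your extra care in spelling out the dictionary between loops and vanishing coordinates, and in tracking which bijections preserve versus reverse inclusion, only makes explicit what the paper leaves implicit.
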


In the case when the underlying matroid of the tropical Pl\"ucker vector $p$ is the uniform matroid $U_{m,n}$, it was proved in \cite{kapranov} and \cite{joswig} that any regular subdivision of the vertex figure $\Gamma_B$ is induced by a regular matroid subdivision of the hypersimplex $\Delta_{m,n}$. Their results extend easily to our more general setup, where we impose no restrictions on the underlying matroid of $p$.

\begin{proposition}[{\cite[Corollary 1.4.14]{kapranov}, \cite[Proposition 4]{joswig}}]\label{existence}
 For any regular subdivision $\mathcal{S}$ of the vertex figure $\Gamma_B$, there is a regular matroid subdivision $\D$ of some matroid polytope $\Gamma'$ with the same vertex figure as $\Gamma$ (i.e. $\Gamma'_B = \Gamma_B$) that restricts to the subdivision $\mathcal{S}$ when intersected with $\Gamma_B$.
\end{proposition}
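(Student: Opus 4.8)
The plan is to realize the subdivision $\mathcal{S}$ through the tropical Stiefel map, rather than by naively coning from the apex $e_B$. The latter is tempting but fails: even when $\mathcal{S}$ is a matroid subdivision of $\Gamma_B$, the cone from $e_B$ over a cell of $\mathcal{S}$ need not be a matroid polytope (already for $\Delta_{2,4}$, coning a triangle of the equatorial square produces a simplex with a bad edge $e_i + e_j - e_k - e_l$). Throughout I assume, as in the discussion preceding the statement, that $B = \{1,\dotsc,m\}$.

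First I would pass from $\mathcal{S}$ to a tropical matrix. By the Cayley trick set up above, the regular subdivision $\mathcal{S}$ of $\Gamma_B \cong \Lambda$ corresponds to a regular mixed subdivision $\mathcal{M}$ of the Minkowski sum $\sum_{j \in [n] \setminus B} \Delta_{I_j}$. Regular mixed subdivisions of a Minkowski sum of simplices are exactly the subdivisions induced by tropical matrices (this is part of the Develin--Sturmfels theory of tropical convexity), so there is a matrix $W = (w_{ij}) \in \T^{B \times ([n]\setminus B)}$, with $w_{ij} = \infty$ precisely when $i \notin I_j$, that induces $\mathcal{M}$.

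Next I would build the matroid subdivision. Form the $m \times n$ tropical matrix $A := [\,\mathrm{Id} \mid W\,]$, whose columns indexed by $B$ are the standard basis vectors (entries $0$ on the diagonal and $\infty$ off it) and whose column $j \in [n] \setminus B$ is the corresponding column of $W$. Define $\tilde{p} \in \T^{\binom{[n]}{m}}$ by letting $\tilde{p}_C$ be the tropical maximal minor $\tdet$ of the $m \times m$ submatrix of $A$ on the columns $C$. It is classical that the tropical maximal minors of any tropical matrix satisfy the tropical Pl\"ucker relations (they are the valuations of the honest maximal minors of a lift of $A$ over $\puiseux$), so $\tilde{p}$ is a tropical Pl\"ucker vector and $\D := \mathcal{D}_{\tilde p}$ is a matroid subdivision, whose underlying matroid $M'$ is the transversal matroid of $A$. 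A one-line computation shows that $C = B - i \cup j$ is a basis of $M'$ if and only if $w_{ij} \neq \infty$, i.e.\ if and only if $i \in I_j$, so $M'$ has exactly the same bases adjacent to $B$ as $M$, and hence $\Gamma'_B = \Gamma_B$. The same computation yields $\tilde{p}_{B - i \cup j} = w_{ij}$ for every adjacent basis, since the $m-1$ identity columns force a matching and leave only the entry $w_{ij}$.

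It remains to verify that $\D$ restricts to $\mathcal{S}$ when intersected with $\Gamma_B = \Gamma' \cap h_B$, and this is the step I expect to be the main obstacle. Since the adjacent bases $e_{B-i\cup j}$ all lie on the hyperplane $h_B$ and carry the heights $\tilde{p}_{B-i\cup j} = w_{ij}$, the naive hope is that $\D \cap h_B$ is simply the regular subdivision of $\Gamma_B$ induced by $W$, namely $\mathcal{S}$; but in general slicing a regular subdivision by a hyperplane can refine it, because vertices of $\Gamma'$ lying strictly below $h_B$ may project into the interior of $\Gamma_B$. The crux is therefore to show that no such refinement occurs here, i.e.\ that the local subdivision $\D_B = \D \cap \Gamma_B$ of the tropical Stiefel matroid subdivision coincides with the mixed subdivision determined by $W$. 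I would prove this by matching each maximal cell of $\mathcal{M}$, together with its selection data in $A$, to the maximal cell of $\D$ incident to $e_B$, and checking that the tropical-determinant structure of $\tilde{p}$ forces the vertex figure of that cell at $e_B$ to be exactly the Cayley image of the mixed cell; consistency of these identifications along shared faces then follows because $\tilde{p}$ is a genuine tropical Pl\"ucker vector, so its cells glue into a subdivision. This is precisely the local form of the correspondence between tropical Stiefel linear spaces and mixed subdivisions, and specializing to $W$ with all entries finite (so $M' = U_{m,n}$) recovers the uniform case of Kapranov \cite{kapranov} and Joswig \cite{joswig}.
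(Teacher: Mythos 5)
Your construction is essentially the paper's own: in Section \ref{secconical} the paper likewise encodes $\mathcal{S}$ as a height matrix $V \in \T^{B \times ([n]\setminus B)}$, forms the augmented matrix $\bar V$ with a tropical identity block on the columns indexed by $B$, and takes the vector $\tau_V$ of tropical maximal minors, whose underlying matroid is the principal transversal matroid $M'$ --- your $A$ and $\tilde p$ are exactly $\bar V$ and $\tau_V$, and your observations about adjacent bases and $\tilde p_{B - i \cup j} = w_{ij}$ match the paper's. The one step you flag as the main obstacle, namely that $\D_B$ equals $\mathcal{S}$ rather than a proper refinement of it, is precisely the step the paper does not reprove but cites from Kapranov and Joswig, so your sketch there is consistent with, and no less complete than, the paper's treatment.
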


We will give some details about the proof of Proposition \ref{existence} and the description of the matroid polytope $\Gamma'$ later in Section \ref{secconical}.

It follows from the two previous propositions that studying the combinatorics of local tropical linear spaces is equivalent to studying the combinatorics of mixed subdivisions of Minkowski sums of simplices, as stated by the following theorem.

\begin{theorem}\label{thmdual}
 A poset $P$ is isomorphic to the face poset of an $m$-dimensional local tropical linear space in $\R^n$ if and only if $P$ is isomorphic to the dual face poset of a coherent mixed subdivision of a Minkowski sum of simplices $\sum_{j = 1}^{n-m}\Delta_{I_j} \subseteq \R^m$, where $I_j \subseteq [m]$ for all $j$.
\end{theorem}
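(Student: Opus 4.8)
The plan is to establish both implications by assembling Proposition \ref{propdual}, Proposition \ref{existence}, and the coherence-preserving version of the Cayley trick, supplemented by an explicit matroid whose vertex figure realizes any prescribed collection of simplices. Throughout I take $B = \{1, \dots, m\}$, so that $[n] \setminus B$ has $n - m$ elements and each set $I_j = \{i \in B \mid B - i \cup j \in \B(M)\}$ is a subset of $[m]$, matching the shape of the Minkowski sum in the statement.

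For the forward implication, suppose $P$ is the face poset of an $m$-dimensional local tropical linear space $L(p)_B$, and let $\D = \mathcal{D}_p$ be the associated regular matroid subdivision. First I would check that the local subdivision $\D_B = \D \cap \Gamma_B$ is coherent: since $\Gamma_B = \Gamma \cap h_B$ is a hyperplane slice of $\Gamma$ whose vertices are all vertices of $\Gamma$, restricting the piecewise-linear convex lifting function of $\D$ to $h_B$ exhibits $\D_B$ as the regular subdivision of $\Gamma_B$ induced by the heights $p_A$. The Cayley trick carries coherent subdivisions of $\Lambda \cong \Gamma_B$ to coherent mixed subdivisions of $\sum_{j \in [n] \setminus B} \Delta_{I_j}$, so the mixed subdivision $\mathcal{M}_B$ of Proposition \ref{propdual} is coherent, and that proposition exhibits $P$ as its dual face poset.

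For the converse, let $\mathcal{M}$ be a coherent mixed subdivision of $\sum_{j=1}^{n-m} \Delta_{I_j}$ with each $I_j \subseteq [m]$ nonempty. I first realize $\Lambda = \Cayley\{\Delta_{I_j}\}$ as a vertex figure of a matroid: represent a matroid $M$ on $[n]$ by the columns $e_1, \dots, e_m$ together with generic vectors $v_j := \sum_{i \in I_j} \lambda_{ij} e_i$ for $j \in \{m+1, \dots, n\}$. For generic nonzero $\lambda_{ij}$ one has $B - i \cup j \in \B(M)$ if and only if $i \in I_j$, hence $\Gamma_B = \Lambda$. Running the Cayley trick backwards turns $\mathcal{M}$ into a coherent subdivision $\mathcal{S}$ of $\Gamma_B$. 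By Proposition \ref{existence} there is a regular matroid subdivision $\D$ of some matroid polytope $\Gamma'$ with $\Gamma'_B = \Gamma_B$ that restricts to $\mathcal{S}$; since regular matroid subdivisions are precisely those induced by tropical Pl\"ucker vectors, $\D = \mathcal{D}_p$ for some $p$, and $\D_B = \mathcal{S}$. Applying the Cayley trick once more recovers $\mathcal{M}_B = \mathcal{M}$, so Proposition \ref{propdual} identifies $P$ with the face poset of $L(p)_B$.

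The routine steps are the matching of indices and the generic-position computation; the delicate point to get right is that coherence is preserved throughout — both when restricting the regular subdivision $\D$ to the vertex figure $\Gamma_B$ and under the two directions of the Cayley-trick bijection between subdivisions of $\Lambda$ and mixed subdivisions of the Minkowski sum. Once these coherence statements are in hand (the former via restriction of the convex lifting function, the latter from the standard theory of the Cayley trick \cite{cayley}), the theorem follows by chaining Propositions \ref{propdual} and \ref{existence} with the matroid realization above.
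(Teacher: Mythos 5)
Your proposal is correct and follows essentially the same route as the paper, which derives Theorem \ref{thmdual} directly by chaining Proposition \ref{propdual} (Cayley-trick duality) with Proposition \ref{existence} (realizing any regular subdivision of a vertex figure by a regular matroid subdivision). The paper leaves the coherence bookkeeping and the realization of $\Cayley\{\Delta_{I_j}\}$ as a vertex figure implicit (the latter is supplied by the transversal-matroid construction in Section \ref{secconical}); you make both explicit, but the underlying argument is the same.
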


This duality between local tropical linear spaces and mixed subdivisions of Minkowski sums of simplices can be used to get a bound on the $f$-vector of local tropical linear spaces.

\begin{proposition}\label{localfvector}
 The number of $i$-dimensional faces of an $m$-dimensional local tropical linear space in $\R^n$ which become bounded after modding out by the lineality space generated by the vector $(1, \dotsc, 1)$ is at most
\begin{equation*}
 \binom{n-i-1}{ n-m-i, \, i-1,\, m-i} = \binom{n-i-1}{i-1} \binom{n-2i}{m-i},
\end{equation*}
 and the number of $i$-dimensional faces without any boundedness constraint is at most
\begin{equation*}
 \frac{n-m}{n-i} \cdot \binom{n-1}{n-m, \, i-1, \, m-i } = \binom{n-i-1}{m-i} \binom{n-1}{i-1}.
\end{equation*}
Furthermore, for any $m$ and $n$ there is a local tropical linear space that achieves all these bounds.
\end{proposition}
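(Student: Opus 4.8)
The plan is to transport the whole statement into the combinatorics of mixed subdivisions via Theorem~\ref{thmdual} and then solve a purely enumerative extremal problem. By Proposition~\ref{propdual}, an $m$-dimensional local tropical linear space $L(p)_B$ is combinatorially dual to a coherent mixed subdivision $\mathcal{M}$ of a Minkowski sum $\sum_{j=1}^{n-m}\Delta_{I_j}$ with $I_j\subseteq[m]$. Modding out by the lineality line $\R\cdot\mathbf 1$ makes $L(p)_B$ into an $(m-1)$-dimensional complex dual to the $(m-1)$-dimensional polytope $\sum_j\Delta_{I_j}$, so an $i$-dimensional face of $L(p)_B$ is dual to an $(m-i)$-dimensional cell of $\mathcal{M}$, and it becomes bounded modulo lineality exactly when that cell is \emph{interior}, i.e.\ not contained in the boundary of $\sum_j\Delta_{I_j}$. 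The first thing I would do is record the dictionary for a cell $\sigma=\Delta_{S_1}+\dots+\Delta_{S_{n-m}}$: setting $U:=\bigcup_j S_j$ and letting $H$ be the graph on $U$ that joins two elements whenever they lie in a common $S_j$, one has $\dim\sigma=|U|-c(H)$, where $c(H)$ is the number of connected components of $H$. Every cell of $\mathcal{M}$ is loopless ($S_j\neq\emptyset$ for all $j$), the interior cells are precisely those with $U=[m]$ (no coloop), and for these $\dim\sigma=m-c(H)$. Hence the $i$-faces of $L(p)_B$ that are bounded modulo lineality correspond bijectively to the cells of $\mathcal{M}$ with $U=[m]$ and $c(H)=i$, while dropping the condition $U=[m]$ gives the unconstrained count.

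Next I would reduce to the extremal shape of the summands and to fine subdivisions. Passing to the uniform matroid, so that every $I_j=[m]$, and refining the subdivision can only increase the number of cells of each dimension, so it is enough to bound fine coherent mixed subdivisions of $\sum_{j=1}^{n-m}\Delta_{[m]}$; by the Cayley trick these are exactly the regular triangulations of the product of simplices $\Delta_{m-1}\times\Delta_{n-m-1}$. In a fine subdivision every cell $\sigma$ is tight, so its bipartite incidence graph $G$ on $[m]\sqcup[n-m]$ (with an edge $ij$ whenever $i\in S_j$) is a forest; because every summand is used, and for an interior cell every coordinate is used as well, this forest spans all $n$ vertices, has all degrees positive, and its number of connected components equals $c(H)$. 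An interior $(m-i)$-cell therefore corresponds to a spanning forest of $G$ with exactly $i$ components and $n-i$ edges, and the problem becomes: how many such forests can occur in a single regular triangulation of $\Delta_{m-1}\times\Delta_{n-m-1}$?

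I expect this last step to be the main obstacle, because the family of all spanning forests of the required type is vastly larger than the set of cells of any one subdivision, so coherence must be used to cut the count down. My plan is to exploit the generic height function defining $\mathcal{M}$ to attach to each interior $(m-i)$-cell a canonical completion to a full flag of cells — at each step passing to the adjacent higher-dimensional cell selected by the heights — and to read off from this flag an \emph{injective} encoding of the cell as an ordered datum: a partition of a linearly ordered set of size $n-i-1$ into blocks of sizes $n-m-i$, $i-1$, and $m-i$, recording respectively the unused simplices, the merges between the $i$ components, and the coordinate choices. Such an injection would yield the bound $\binom{n-i-1}{n-m-i,\,i-1,\,m-i}$ in the bounded case. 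Running the same argument without the interiority constraint $U=[m]$ allows boundary cells and replaces the strict containment by a cyclic (ballot-type) count, which I expect to produce the correction factor $\frac{n-m}{n-i}$ and hence the second bound $\frac{n-m}{n-i}\binom{n-1}{n-m,\,i-1,\,m-i}$.

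Finally, for tightness I would exhibit a single local tropical linear space meeting all the bounds simultaneously. Taking every $I_j=[m]$ together with the staircase (lexicographic) triangulation of $\Delta_{m-1}\times\Delta_{n-m-1}$, whose cells are the monotone lattice-path forests, a direct lattice-path enumeration should show that the number of interior $(m-i)$-cells equals $\binom{n-i-1}{n-m-i,\,i-1,\,m-i}$ for every $i$ at once, and the same example should attain the unconstrained bound; this is the conical (caterpillar) tropical linear space treated in Section~\ref{secconical}, which makes the enumeration transparent. In short, the dictionary of the first two paragraphs and the tight example of the last are routine, and the crux is the coherence-driven injective encoding that forces the per-subdivision multinomial bound.
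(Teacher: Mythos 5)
Your reduction to fine mixed subdivisions of $(n-m)\cdot\Delta_{m-1}$ and the dimension dictionary (an $i$-face of $L(p)_B$ dual to an $(m-i)$-cell of $\mathcal{M}_B$, bounded iff interior) match the paper exactly. But the step you yourself flag as ``the crux'' --- bounding the number of interior $(m-i)$-cells in a single such subdivision --- is not proved: it is a plan for an injective flag-encoding, hedged with ``I expect,'' and neither the encoding nor its injectivity nor the ballot-type argument for the factor $\frac{n-m}{n-i}$ is actually constructed. As it stands the proposal establishes the dictionary and the tight example but not the upper bounds, which are the content of the proposition.

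Moreover, your framing of that step points in a harder direction than necessary. You write that ``coherence must be used to cut the count down,'' but no coherence is needed: products of simplices are equidecomposable, and since their faces are again products of simplices, \emph{every} triangulation of $\Delta_{r-1}\times\Delta_{s-1}$ has the same number of interior faces in each dimension. The paper's Lemma \ref{fvectormixed} therefore gives an exact count (not merely a bound) for every fine mixed subdivision, quoting the interior-face enumeration of Develin--Sturmfels, and obtains the total count by summing interior faces over the faces of $s\cdot\Delta_{r-1}$ via a Vandermonde identity. This equality is also what makes tightness immediate (any fine example works, once Proposition \ref{existence} realizes it by a tropical Pl\"ucker vector), whereas your approach would still require a separate exact enumeration for the staircase triangulation. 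To repair your write-up, replace the speculative injection by the equidecomposability argument, or else actually construct the encoding and prove it is a bijection.
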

\begin{proof}
 By Proposition \ref{propdual}, the $i$-dimensional (bounded) faces of a local tropical linear space $L(p)_B$ are in correspondence with the (interior) faces of codimension $i-1$ in the associated mixed subdivision $\mathcal{M}_B$ of $\sum_{j \in [n] \setminus B}\Delta_{I_j}$. The maximum number of faces is attained when the mixed subdivision $\mathcal{M}_B$ is a fine mixed subdivision of $(n-m) \cdot \Delta_{m-1}$, so the result follows by substituting $s=n-m$, $r=m$, and $k=m-i$ in the following lemma. The existence of a tropical linear space satisfying these bounds follows from Proposition \ref{existence}.
\end{proof}

\begin{lemma}\label{fvectormixed}
 The number of $k$-dimensional interior faces in any fine mixed subdivision of $s \cdot \Delta_{r-1}$ is equal to 
 \begin{equation}\label{interiorfaces}
  \binom{s-1+k}{s-r+k, \, r-1-k, \, k},
 \end{equation}
 and the total number of $k$-dimensional faces is equal to 
 \[
  \frac{s}{s+k} \cdot \binom{r+s-1}{s, \, r-1-k, \, k}.
 \]
\end{lemma}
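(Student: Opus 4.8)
The plan is to count the interior $k$-dimensional faces of a fine mixed subdivision of $s \cdot \Delta_{r-1}$ by combining two facts: that the combinatorial type of a fine mixed subdivision (equivalently, via the Cayley trick, a fine/triangulated subdivision of the corresponding Cayley polytope) is a well-understood object, and that the count of faces of a given dimension is in fact \emph{independent} of which fine mixed subdivision we pick. The cleanest route is to reduce to the \emph{standard} (also called staircase or lexicographic) fine mixed subdivision, for which the faces admit an explicit combinatorial encoding, and to check that all fine mixed subdivisions have the same $f$-vector.

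First I would recall the structure of a cell in a fine mixed subdivision of $s \cdot \Delta_{r-1} = \Delta_{r-1} + \dots + \Delta_{r-1}$ ($s$ summands). Each cell is a Minkowski sum $F_1 + \dots + F_s$, where each $F_\ell$ is a face of the $\ell$-th copy of $\Delta_{r-1}$, i.e.\ a subsimplex $\Delta_{S_\ell}$ with $S_\ell \subseteq [r]$, and in a \emph{fine} mixed subdivision these summands meet in complementary dimensions, so $\dim(F_1 + \dots + F_s) = \sum_\ell \dim F_\ell = \sum_\ell (|S_\ell| - 1)$. Thus a $k$-dimensional cell corresponds to a choice of nonempty subsets $S_1, \dots, S_s \subseteq [r]$ with $\sum_\ell (|S_\ell|-1) = k$, subject to the fine (transversality) condition that distinguishes a genuine cell of the subdivision. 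The key combinatorial model here is that fine cells correspond bijectively to certain lattice paths / sequences, and via the Cayley trick they are the simplices of a triangulation of the Cayley polytope $\Cayley\{\Delta_{r-1}, \dots, \Delta_{r-1}\}$, whose vertex set is $[s] \times [r]$.

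Next, to make the count concrete I would pass to the lattice-path model for the staircase triangulation: interior $k$-faces of $s\cdot\Delta_{r-1}$ correspond to monotone lattice paths (or equivalently to the integer points / combinatorial data) that record, for each of the $s$ steps, which coordinate directions are ``active,'' with the total number of active steps equal to $k$, the interiority condition forcing every coordinate of $[r]$ to appear. Carefully translating the interiority condition (the face is not contained in any boundary facet $\{x_i = 0\}$ of the dilated simplex) into a surjectivity-type condition on these subsets, the enumeration becomes a balls-in-boxes count: we must distribute the $k$ ``increments'' and the $s$ summands so that all $r$ coordinates are used, which yields exactly the trinomial coefficient $\binom{s-1+k}{\,s-r+k,\; r-1-k,\; k\,}$ in \eqref{interiorfaces}. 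For the total (not necessarily interior) count, I would drop the surjectivity constraint and instead use a cyclic-sieving / cycle-lemma style argument — this is the origin of the prefactor $\tfrac{s}{s+k}$, which is precisely the kind of factor produced by a rotation action on cyclic sequences of length $s+k$ (a Catalan-type or ballot-type correction).

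The main obstacle I anticipate is \textbf{proving that the $f$-vector is a subdivision-independent invariant}, i.e.\ that every fine mixed subdivision of $s\cdot\Delta_{r-1}$ has the same number of $k$-faces (and the same number of interior $k$-faces). This is what lets us compute using the one convenient staircase subdivision; it is plausible because these subdivisions are all triangulations of the same Cayley polytope, which is \emph{unimodular} in the relevant sense, so all its triangulations share the same $f$-vector (equivalently, the normalized volume and all mixed volumes are fixed, and unimodularity forces a common face-count). I would therefore devote the bulk of the argument to establishing this unimodularity/equidecomposability, after which the two displayed formulas follow from the explicit lattice-path enumeration described above together with a routine verification (e.g.\ by a generating-function or WZ-style identity) that the balls-in-boxes count equals the stated multinomial expressions.
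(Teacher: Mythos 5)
Your reduction is the same one the paper uses: pass through the Cayley trick to triangulations of the product of simplices $\Delta_{r-1}\times\Delta_{s-1}$, argue that the face count is independent of the triangulation, and then compute on one convenient example. Where you diverge is in where the actual numbers come from. The paper obtains the interior-face formula by citing the enumeration of interior faces of triangulations of $\Delta_{r-1}\times\Delta_{s-1}$ from Develin--Sturmfels (their Corollary~25, phrased in terms of bounded faces of tropical polytopes), and it gets the total count not by a cycle-lemma argument but by summing the interior-face formula over all faces of $s\cdot\Delta_{r-1}$: each face of $s\cdot\Delta_{r-1}$ is a smaller dilated simplex $s\cdot\Delta_{l-1}$, there are $\binom{r}{l}$ of them, and a Vandermonde convolution collapses the sum to $\frac{s}{s+k}\binom{r+s-1}{s,\,r-1-k,\,k}$. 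That summation is worth adopting: it is cleaner than a cyclic-sieving argument and avoids having to extend the lattice-path model to non-interior faces at all.

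There are two genuine soft spots in your write-up. First, the invariance of the $f$-vector is not a consequence of unimodularity alone: unimodularity of the point configuration forces all triangulations to have the same number of top-dimensional simplices (equal normalized volumes), but by itself says nothing about lower-dimensional faces, let alone interior ones. What is actually needed is the equidecomposability of products of simplices (a theorem in its own right, not a formal consequence of unimodularity), together with the observation --- which the paper makes explicitly --- that every face of a product of simplices is again a product of simplices, so that equidecomposability propagates to the count of \emph{interior} faces in each dimension. Second, the enumeration itself, which is the entire content of the two displayed formulas, is deferred to ``a routine verification'': as written, the proposal establishes the reduction but not the formulas. If you keep the staircase route, you must actually exhibit the bijection between interior $k$-faces and the lattice-path data and verify the trinomial count, and you must carry out the cycle-lemma argument for the total count rather than asserting that the prefactor $\frac{s}{s+k}$ is ``the kind of factor'' such arguments produce.
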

\begin{proof}
 Interior faces of dimension $k$ in a fine mixed subdivision of $s\cdot \Delta_{r-1}$ are in correspondence with interior faces of dimension $k+s-1$ in an associated triangulation of the product $\Delta_{r-1}\times \Delta_{s-1}$. Products of simplices are equidecomposable polytopes, that is, all its triangulations have the same f-vector. Moreover, since faces of a product of simplices are also product of simplices, it follows that all triangulations of a product of simplices have the same number of interior faces in each dimension. The number of interior faces in a triangulation of a product of simplices was studied in \cite[Corollary 25]{develin} in connection to tropical polytopes, from which \eqref{interiorfaces} follows.
 
 The total number of faces can be computed by adding interior faces over all faces of $s\cdot \Delta_{r-1}$. For any $1 \leq l \leq r$, there are $\binom{r}{l}$ faces of $s\cdot \Delta_{r-1}$ isomorphic to $s\cdot \Delta_{l-1}$, so the total number of $k$-dimensional faces in a fine mixed subdivision of $s\cdot \Delta_{r-1}$ is equal to
 \begin{align*}
  \sum_{l=1}^r \binom{r}{l} \binom{s-1+k}{s-l+k, \, l-1-k, \, k} &= \sum_{l=1}^r \binom{r}{l} \binom{s-1+k}{k} \binom{s-1}{s-l+k}\\
  &= \binom{s-1+k}{k} \cdot \sum_{l=1}^r \binom{r}{l} \binom{s-1}{s+k-l}\\
  &= \binom{s-1+k}{k} \cdot \binom{r+s-1}{s+k}\\
  &= \frac{s}{s+k} \cdot \binom{r+s-1}{s, \, r-1-k, \, k},
 \end{align*}
 as desired.
\end{proof}

Any regular matroid subdivision (and thus any tropical linear space) can be encoded by the sequence of mixed subdivisions arising from the different local subdivisions around all the vertices of its underlying matroid polytope. It would be very interesting to clarify exactly how these different mixed subdivisions are related to each other.

\section{Conical tropical linear spaces}\label{secconical}

In this section we study a certain class of tropical linear spaces that we call conical tropical linear spaces, and we give a simple proof that they satisfy the $f$-vector conjecture. We start by describing more in depth the relation stated in Proposition \ref{existence} between regular subdivisions of the vertex figure $\Gamma_B$ and regular matroid subdivisions of the matroid polytope $\Gamma'$, following the ideas in \cite{joswig}.

Suppose $\Gamma = \Gamma(M)$ is a matroid polytope, and let $B$ be a basis of the matroid $M$. The vertex figure $\Gamma_B \cong \convex\{ e_j + \Delta_{I_j} \mid j \in [n] \setminus B \}$ is a subpolytope of the product of simplices $\Delta_{[n]\setminus B} \times \Delta_B$. By definition, a regular subdivision $\mathcal{S}$ of $\Gamma_B$ is obtained by lifting its vertices to some heights and then projecting back the lower faces of the resulting polytope. This set of heights on the vertices of $\Gamma_B$ can thus be encoded as a matrix $V \in \T^{B \times ([n]\setminus B)}$ (taking the entry $V_{i,j} = \infty$ if the corresponding vertex $e_j+e_i$ is not in $\Gamma_B$). The \textbf{augmented matrix} of $V$ is the $m \times n$ matrix $\bar{V}$ whose submatrix consisting of the columns indexed by $B$ is equal to the tropical identity matrix of size $m$ (i.e., the $m \times m$ matrix with zeroes on the diagonal and $\infty$ on the rest of its entries), and whose submatrix consisting of the columns indexed by 
$[n] \setminus B$ is equal to $V$. Define the vector $\tau_V \in \T ^{\binom{[n]}{m}}$ as $(\tau_V)_A := \tdet (\bar{V}_A)$, where $\bar{V}_A$ denotes the $m \times m$ submatrix of $\bar{V}$ whose columns are indexed by the elements of $A$, and $\tdet$ denotes the tropical determinant. More explicitly, if $A = \{a_1, \dotsc , a_m\}$ then 
\[
 (\tau_V)_A = \min_{\sigma \in S_m} \left( \bar{V}_{a_1 , \sigma(a_1)} + \bar{V}_{a_2 , \sigma(a_2)} + \dotsb + \bar{V}_{a_m , \sigma(a_m)} \right).
\]
It follows from this construction that the vector $\tau_V$ is a tropical Pl\"ucker vector, and that its underlying matroid is the principal transversal matroid $M'$ of the bipartite graph with vertex set $[n]$ and with an edge $(i,j)$ if $i \in I_j$ (see \cite{transversal}). The matroid polytopes $\Gamma$ and $\Gamma':= \Gamma(M')$ thus have the same vertex figure around the basis $B$. Moreover, if $\D$ denotes the regular matroid subdivision induced by $\tau_V$ on $\Gamma'$, the subdivision $\D_B$ induced by $\D$ on the vertex figure $\Gamma'_B = \Gamma_B$ is equal to the original subdivision $\mathcal{S}$ (see {\cite[Corollary 1.4.14]{kapranov} and \cite[Proposition 4]{joswig}}).

Given a polyhedral subdivision $\Sigma$ of a polytope $P$, let us denote by $\mathcal{I}(\Sigma)$ the graded poset of interior faces of $\Sigma$ ordered by reverse inclusion. The following proposition was proved for $m \leq 3$ and conjectured for general $m$ in a first version of \cite{joswig}. It was later proved in a second version of their paper \cite[Theorem 7]{joswig}. Our proof was obtained independently, and presents different ideas to the ones used in their approach.

\begin{proposition}\label{joswigconjecture}
 Assume $\mathcal{S}$ is a regular subdivision of the vertex figure $\Gamma_B \cong \convex\{ e_j + \Delta_{I_j} \mid j \in [n] \setminus B \}$ induced by the matrix $V \in \T^{B \times ([n]\setminus B)}$. Let $\D$ be the matroid subdivision induced by $\tau_V \in \T ^{\binom{[n]}{m}}$ on the matroid polytope $\Gamma'$. Then the posets $\mathcal{I}(\mathcal{S})$ and $\mathcal{I}(\D)$ are isomorphic. 
\end{proposition}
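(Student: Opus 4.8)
The plan is to deduce the isomorphism from the vertex figure correspondence at $e_B$, reducing it to a single combinatorial fact: every matroid occurring in $\D$ that gives an \emph{interior} face of $\D$ already has $B$ as a basis. By construction $\D$ restricts to $\mathcal{S}$ on the vertex figure $\Gamma'_B = \Gamma_B = \Gamma' \cap h_B$. Since $e_B$ is the unique vertex of $\Gamma'$ maximizing $\sum_{i\in B} x_i$ (with value $m$, every other vertex lying at level at most $m-1$), the hyperplane $h_B$ realizes a genuine vertex figure, and intersection with $h_B$ yields an inclusion-preserving bijection
\[
 \phi\colon \{F \in \D \mid e_B \in F\} \longrightarrow \{\text{faces of }\mathcal{S}\},\qquad F \longmapsto F \cap h_B,
\]
which lowers dimension by one, preserves codimension, and carries facets of $\Gamma'$ through $e_B$ to facets of $\Gamma_B$. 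As $\phi$ preserves inclusion it is an isomorphism of the posets ordered by reverse inclusion, compatibly with the grading by codimension.

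Next I would check that $\phi$ matches interior faces, once attention is restricted to faces through $e_B$. If such an $F$ lies in a facet $\Phi$ of $\Gamma'$, then $e_B \in F \subseteq \Phi$, so $\Phi$ passes through $e_B$ and $\phi(F) \subseteq \Phi \cap h_B \subseteq \partial \Gamma_B$; conversely, every facet of $\Gamma_B$ containing $\phi(F)$ is of the form $\Phi \cap h_B$ with $e_B \in \Phi$, and then $F \subseteq \Phi$. Hence, for $F$ containing $e_B$, we have $F \not\subseteq \partial \Gamma'$ if and only if $\phi(F) \not\subseteq \partial\Gamma_B$, so $\phi$ restricts to an isomorphism between the interior faces of $\D$ that contain $e_B$ and $\mathcal{I}(\mathcal{S})$.

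It remains --- and this is the main obstacle --- to show that \emph{every} interior face of $\D$ contains $e_B$, equivalently that $B \notin \B(M_F)$ forces $F \subseteq \partial\Gamma'$, where $M_F$ is the matroid of $F$. Here I would exploit that $\D$ is induced by $\tau_V$, whose augmented matrix $\bar{V}$ is the tropical identity on the columns indexed by $B$. Writing $F$ as the face selected by a functional $w$, its matroid $M_F$ has as bases the sets $A$ maximizing $-(\tau_V)_A + \sum_{a\in A} w_a$; expanding $\tdet \bar{V}_A$, this maximum equals the maximum weight of a matching saturating $B$ in the bipartite graph on $[n] \sqcup B$ with edge weights $w_a - \bar{V}_{a,i}$, and $\B(M_F)$ is the family of column sets of the optimal such matchings. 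Crucially, each column $i \in B$ can be matched only to row $i$, so if column $i$ is saturated by an optimal matching then its private edge $(i,i)$ must be tight. I would then argue that if $M_F$ has neither a loop nor a coloop, then the absence of coloops makes every tight $B$-saturating matching optimal, so $M_F$ is the transversal matroid of the tight subgraph, while the absence of loops makes every column $i \in B$ saturated by some optimal matching, forcing every edge $(i,i)$ to be tight; the identity matching then lies in the tight subgraph and saturates $B$, hence is optimal, giving $B \in \B(M_F)$. Contrapositively, $B \notin \B(M_F)$ produces a loop or a coloop of $M_F$, which confines $F$ to a coordinate facet $\{x_e = 0\}$ or $\{x_e = 1\}$ of $\Gamma'$, so $F \subseteq \partial\Gamma'$, as needed.

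Putting the three steps together, $\mathcal{I}(\D)$ coincides with the subposet of interior faces of $\D$ containing $e_B$, which $\phi$ identifies isomorphically with $\mathcal{I}(\mathcal{S})$; this is the desired isomorphism. I expect essentially all of the difficulty to sit in the matching/transversal analysis of the third step, the first two steps being the formal combinatorics of a vertex figure.
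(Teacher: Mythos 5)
Your proposal is correct in substance but takes a genuinely different route from the paper's. The paper first reduces to the case where $\Gamma'$ is the full hypersimplex $\Delta_{m,n}$ and $V$ is real, perturbs $V$ so that $\mathcal{S}$ becomes a triangulation, counts that $\mathcal{S}$ then has exactly $\binom{n-2}{m-1}$ facets (via Lemma \ref{fvectormixed}), and invokes Speyer's theorem that any matroid subdivision of $\Delta_{m,n}$ has at most $\binom{n-2}{m-1}$ facets to force every facet of $\D$ through $e_B$; interior faces, being intersections of facets, then all contain $e_B$. You instead prove the key fact --- every interior face of $\D$ contains $e_B$ --- directly, by reading the face matroids of the subdivision induced by the tropical determinant $\tau_V$ as the column supports of maximum-weight $B$-saturating matchings and running LP duality: absence of loops in $B$ forces every diagonal edge $(i,i)$ to be tight, absence of coloops outside $B$ forces the dual potentials $v_a$ to vanish for $a \notin B$, and the identity matching is then optimal, giving $B \in \B(M_F)$. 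Your argument is more self-contained (it avoids Speyer's nontrivial facet bound and the perturbation-to-a-triangulation step), whereas the paper's is shorter granted that bound and shows in passing that conical subdivisions are extremal for the facet count. Three details you should write out when expanding the sketch: (i) the translation ``$F$ interior iff $M_F$ has no loop in $B$ and no coloop outside $B$'' needs the remark that no $i \in B$ is a loop of $M'$ and no $j \notin B$ is a coloop of $M'$ (both because $B \in \B(M')$), so the corresponding coordinate hyperplanes genuinely cut \emph{proper} faces of $\Gamma'$ --- the paper sidesteps this by passing to the hypersimplex, and note that flacet facets of $\Gamma'$ are harmless since you only use this implication in the direction ``loop or coloop implies boundary''; (ii) the dual potentials $v_i$ for $i \in B$ need not vanish, so the identity matching should be seen to be optimal because it saturates exactly the columns of $B$ and $v_a = 0$ for $a \notin B$, whence it attains the dual objective $\sum_i u_i + \sum_a v_a$; (iii) the step ``$\phi(F) \subseteq \Phi \cap h_B$ implies $F \subseteq \Phi$'' deserves a line: every vertex of $F$ other than $e_B$ lies at level $\sum_{i \in B} x_i \leq m-1$, hence on a ray from $e_B$ through a point of $F \cap h_B$, so a supporting hyperplane containing $e_B$ and $F \cap h_B$ contains all of $F$.
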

\begin{proof}
 Note that replacing the entries of $V$ which are equal to $\infty$ by large enough real numbers naturally produces an extension of the subdivisions $\mathcal{S}$ and $\mathcal{D}$ to the product of simplices $\Delta_{[n]\setminus B} \times \Delta_B$ and the hypersimplex $\Delta_{m,n}$, respectively. We can therefore assume that $\Gamma$ is the full hypersimplex $\Delta_{m,n}$, that $\Gamma_B \cong \Delta_{[n]\setminus B} \times \Delta_B$, and that $V$ is a real matrix.

 Now, we first prove that every facet of $\D$ contains the vertex $e_B$, and thus every facet of $\D$ intersects $\Gamma_B$ in a facet of the subdivision $\mathcal{S} = \D_B$. Note that a sufficiently small perturbation on the matrix $V$ produces a refinement on both subdivisions $\mathcal{S}$ and $\D$, so without loss of generality we can assume that $\mathcal{S}$ is a triangulation of $\Gamma_B$.
 
 If $\mathcal{S}$ is a triangulation of $\Gamma_B$ then, as discussed in Proposition \ref{localfvector} and Lemma \ref{fvectormixed}, the number of facets of $\mathcal{S} = \D_B$ is exactly 
\begin{equation*}
 \binom{n-2}{ n-m-1, \, 0,\, m-1} = \binom{n-2}{m-1}.
\end{equation*}
Each of these facets arises as the intersection of a facet of $\D$ with the vertex figure $\Gamma_B$. It was proved by Speyer in \cite[Theorem 6.1]{speyer} that any matroid subdivision of the hypersimplex $\Delta_{m,n}$ has at most $\binom{n-2}{m-1}$ facets, so this implies that the subdivision $\D$ has exactly $\binom{n-2}{m-1}$ facets, all of which contain the vertex $e_B$.

Now, every interior face of $\D$ can be obtained as an intersection of facets of $\D$, so all interior faces contain the vertex $e_B$. It follows that the map from $\mathcal{I}(\D)$ to $\mathcal{I}(\mathcal{S})$ sending the face $F$ to the face $F \cap \Gamma_B$ is an isomorphism, as desired.  
\end{proof}

Proposition \ref{joswigconjecture} states that the regular matroid subdivision $\D$ induced by $\tau_V$ is simply the ``cone'' from vertex $e_B$ over the subdivision $\mathcal{S}$, in such a way that all the facets of $\D$ contain the vertex $e_B$. Any matroid subdivision arising in this way will be called a \textbf{conical matroid subdivision}. A tropical linear space dual to a conical matroid subdivision will be called a \textbf{conical tropical linear space}. Conical tropical linear spaces are precisely those tropical linear spaces such that all their bounded faces lie in a single local tropical linear space.

\begin{example}
 Suppose $m=2$. In this case, the space of tropical Pl\"ucker vectors with finite coordinates agrees with the space of phylogenetic trees on $n$ leaves (see \cite{ss}). In fact, after modding out by its lineality space, any tropical linear space dual to a matroid subdivision of the hypersimplex $\Delta_{2,n}$ is homeomorphic to a tree with $n$ (unbounded) leaves. Each of its unbounded rays is dual to a facet of $\Delta_{2,n}$ of the form $x_i = 1$, so they are naturally labeled by the numbers $1,\dotsc,n$. The local tropical linear space around a basis $B = \{i,j\}$ is simply the unique path in the tree between leaves $i$ and $j$. It follows that the tropical linear space is conical if and only if there is a path between two of its leaves containing all internal vertices, that is, if and only if it is a \textbf{caterpillar tree}.
\end{example}

\begin{theorem}\label{conicalfvector}
 Any conical tropical linear space satisfies the $f$-vector conjecture.
\end{theorem}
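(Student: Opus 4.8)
The plan is to reduce the statement directly to the $f$-vector bound for local tropical linear spaces established in Proposition \ref{localfvector}. The key observation is that the bound of Proposition \ref{localfvector} is \emph{literally} the bound asserted in Conjecture \ref{fvector}, so once I know that every bounded face of a conical tropical linear space $L$ already lives inside a single local tropical linear space $L(p)_B$, the conjecture for $L$ follows immediately.

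First I would establish the containment of bounded faces at the level of the subdivision. A conical tropical linear space is by construction dual to a conical matroid subdivision $\D$, in which every facet (maximal cell) contains the common vertex $e_B$. Every interior face of $\D$ is the intersection of the facets that contain it, and $e_B$ lies in all of these facets, so every interior face of $\D$ contains $e_B$. By the duality of Corollary \ref{dualloopless}, the faces of $L$ that become bounded after modding out by the lineality space $\R \cdot \textbf{1}$ are exactly those dual to interior faces of $\D$; hence all such bounded faces are dual to faces of $\D$ containing $e_B$, which is to say they all lie in the subcomplex $L(p)_B$.

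Next I would transfer the count. Since $L(p)_B$ is a subcomplex of $L$, every $i$-dimensional bounded face of $L$ is in particular an $i$-dimensional bounded face of $L(p)_B$. Consequently the number of $i$-dimensional bounded faces of $L$ is at most the number of $i$-dimensional bounded faces of $L(p)_B$, which by Proposition \ref{localfvector} is at most $\binom{n-i-1}{i-1}\binom{n-2i}{m-i}$. As this matches the bound of Conjecture \ref{fvector} term for term, the argument is complete.

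The one step I would treat with care is the claim that every interior face of the conical subdivision contains $e_B$, which rests on the fact that an interior face of a polyhedral subdivision equals the intersection of the maximal cells containing it. This is precisely the observation already exploited in the proof of Proposition \ref{joswigconjecture}, so I would invoke it directly rather than reprove it. Everything else is bookkeeping: matching the dimension $i$ on both sides and noting that the two numerical bounds coincide verbatim, so I do not anticipate any genuine obstacle beyond this single duality fact.
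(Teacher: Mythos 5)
Your proof is correct and follows essentially the same route as the paper: the paper's (much terser) proof invokes the preceding observation that a conical tropical linear space has all its bounded faces in a single local tropical linear space $L_B$, and then cites Proposition \ref{localfvector}. You simply spell out that observation explicitly, using the same ``interior faces are intersections of facets containing $e_B$'' argument already used in the proof of Proposition \ref{joswigconjecture}, so there is no substantive difference.
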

\begin{proof}
 As discussed above, $L$ is a conical tropical linear space if and only if there exists some local tropical linear space $L_B$ containing all bounded faces of $L$. The result follows from Proposition \ref{localfvector}.
\end{proof}

Since any subdivision of the vertex figure $\Gamma_B$ can be refined to a triangulation, any conical matroid subdivision of the hypersimplex $\Delta_{m,n}$ can be refined to a conical matroid subdivision with exactly $\binom{n-i-1}{i-1} \binom{n-2i}{m-i}$ bounded faces of codimension $i$ (see Propositions \ref{localfvector} and \ref{fvectormixed}). This implies that any conical tropical linear space dual to a subdivision of $\Delta_{m,n}$ can be ``refined'' into a conical tropical linear space whose $f$-vector attains the upper bound predicted by the $f$-vector conjecture (Conjecture \ref{fvector}). However, there are tropical linear spaces attaining this upper bound which are not conical. If $m=2$, for example, any tropical linear space homeomorphic to a trivalent tree with $n$ leaves that is not a caterpillar tree also attains this upper bound, but it is not a conical tropical linear space. 

It was proved in \cite{joswig} that for any basis $B$ of the uniform matroid $U_{m,n}$, the resulting map $\tau: \R^{m \times (n-m)} \to \Dr_{m,n}$ defined by $V \mapsto \tau_V$ is a combinatorial embedding of the secondary fan of $\Delta_{[n] \setminus B} \times \Delta_{B}$ into the Dressian $\Dr_{m,n} \cap \, \R^{\binom{[n]}{m}}$, that is, it is an injective map between polyhedral fans preserving dimension and the inclusion relation between the cones. The union of the images of the different $\tau$ arising from all possible bases $B$ is precisely the set of tropical Pl\"ucker vectors corresponding to conical tropical linear spaces. It would be very interesting to understand more explicitly what part of the tropical Grassmannian $\TGr_{m,n}$ corresponds to conical tropical linear spaces in the case where $m \geq 3$.

\bibliographystyle{amsalpha}
\bibliography{../Bibliography/bibliography}

\end{document}